\providecommand{\cal}{\mathcal}
\renewcommand{\Bbb}{\mathbb}
\newenvironment{pf}{\begin{proof}}{\end{proof}}
\newcommand{\Ef}{{\cal{F}}}
\newcommand{\El}{{\cal{L}}}
\newcommand{\Yu}{{\cal{U}}}
\newcommand{\Qyu}{{\Bbb{Q}}}
\newcommand{\sig}{\sigma}
\renewcommand{\phi}{\varphi}
\renewcommand{\rho}{\varrho}
\newcommand{\rest}{\restriction}
\newcommand{\ntr}{{n\in\omega}}
\newcommand{\loe}{\leq}
\newcommand{\subs}{\subseteq}
\newcommand{\nnempty}{\ne\emptyset}
\newcommand{\id}[1]{{\operatorname{i\!d}_{#1}}} 
\newtheorem{tw}{Theorem}[section]
\newtheorem{wn}[tw]{Corollary}
\newtheorem{lm}[tw]{Lemma}
\newtheorem{prop}[tw]{Proposition}
\theoremstyle{definition}
\newtheorem{df}[tw]{Definition}
\newtheorem{ex}[tw]{Example}
\newtheorem{pyt}[tw]{Question}
\theoremstyle{remark}
\newtheorem{uwgi}[tw]{Remark}
\newcommand{\setof}[2]{\{#1\colon #2\}}
\newcommand{\bigsetof}[2]{\Bigl\{#1\colon #2\Bigr\}}
\newcommand{\sett}[2]{\{#1\}_{#2}}
\newcommand{\sn}[1]{\{#1\}} 
\newcommand{\dn}[2]{\{#1,#2\}} 
\newcommand{\pair}[2]{\langle #1, #2 \rangle} 
\newcommand{\map}[3]{#1\colon #2 \to #3} 
\newcommand{\img}[2]{#1[#2]} 
\newcommand{\fra}{Fra\"iss\'e}
\newcommand{\ciag}[1]{{\sett{{#1}_n}{\ntr}}}
\newcommand{\iso}{\approx}
\newcommand{\ciagi}[1]{\sig{#1}}
\newcommand{\bV}{{\mathbb{V}}}
\newcommand{\bW}{{\mathbb{W}}}
\newcommand{\bX}{{\mathbb{X}}}
\newcommand{\bP}{{\mathbb{P}}}
\newcommand{\cmp}{\circ} 
\newcommand{\separator}{\begin{center} \leafright \leafright \leafright \decotwo \decotwo \decotwo \leafleft \leafleft\leafleft
\end{center}}
\providecommand{\ar}{\arrow}
\newcommand{\planes}{\mathcal{P}}
\newcommand{\planesdg}[1]{\planes_{\deg \loe #1}}
\newcommand{\field}[1]{\mathbb{F}_{#1}}
\newcommand{\proplane}[1]{#1\mathbf P^2}
\newcommand{\pl}{\Lambda} 
\newcommand{\pline}{\pl}
\newcommand{\coll}{\pl}
\newtheorem*{clm*}{Claim}
\newenvironment{clmproof}[1][\proofname]{\proof[#1]}{\endproof}
\newcommand{\pfp}{\ensuremath{\dagger}}
\renewcommand{\thefootnote}{\fnsymbol{footnote}}
\title{Homogeneity of abstract linear spaces}
\author{
{\sc Wies{\l}aw Kubi\'s}\footnotemark[1]
\and
{\sc Piotr Nowakowski}\footnotemark[1] \footnotemark[2]
\and
{\sc Tomasz Rzepecki}\footnotemark[3]
}
\date{\clocktime\today}
\begin{document}

\maketitle

\footnotetext[1]{Institute of Mathematics, Czech Academy of Sciences, Czech Republic}
\footnotetext[2]{Faculty of Mathematics and Computer Science, University of Lodz, Poland}
\footnotetext[3]{ORCID: \href{https://orcid.org/0000-0001-9786-1648}{0000-0001-9786-1648}. Mathematical Institute of the University of Wrocław, Poland. The third author is supported by the Narodowe Centrum Nauki grant no. 2016/22/E/ST1/00450.}

\begin{abstract}
	We discuss homogeneity and universality issues in the theory of abstract linear spaces, namely, structures with points and lines satisfying natural axioms, as in Euclidean or projective geometry. We show that the two smallest projective planes (including the Fano plane) are homogeneous and, assuming the continuum hypothesis, there exists a universal projective plane of cardinality $\aleph_1$ that is homogeneous with respect to its countable and finite projective subplanes.
	We also show that the existence of a generic countable
    linear space
    is equivalent to an old conjecture asserting that every finite linear space embeds into a finite projective plane.

	\ \\

	\noindent
	{\bf MSC (2010):} 51A05, 51A10.

	\noindent
	{\bf Keywords:} Linear space, homogeneity, projective plane.
\end{abstract}

\tableofcontents

\section{Introduction}
\renewcommand{\thefootnote}{\arabic{footnote}}

A \emph{linear space} is a structure consisting of points and lines, satisfying the obvious axioms: each line passes through at least two points and two different points determine a unique line. The term ``linear space'' may be a bit confusing, as it usually corresponds to vector spaces, however it is already well established in the literature devoted to incidence geometry
\footnote{According to Buekenhout \cite{Buekenhout}, the term \emph{linear space} was coined by Paul Libois in 1961.}, see e.g.~\cite{BaBe, Beutel, Buekenhout, Shult}. We are interested in finite and countable linear spaces, mainly in the context of homogeneity and universality. A linear space $X$ is \emph{homogeneous} if every isomorphism between finite subspaces of $X$ extends to an automorphism of $X$. A linear space $X$ is \emph{universal} for a given class $\Ef$ of spaces if $X \in \Ef$ and every space in $\Ef$ embeds into $X$.

Perhaps the most important and (definitely interesting) examples of linear spaces are \emph{projective planes}, namely, those in which every two lines intersect and
which have at least three points on every line.
Every (countable) linear space embeds into a
(countable) projective plane,
while the question whether every finite linear space embeds into a finite projective plane is an old open problem, see e.g.~\cite{Beutel}. We show, in particular, that a positive answer is implied by the existence of a universal countable
linear space (which, if it exists, is also a universal countable projective plane).
A more precise statement is 
in Theorem~\ref{THMJuniwa} below.

We show that, besides the trivial ones, the only homogeneous linear spaces are the two smallest projective planes $\proplane{\field{2}}$, $\proplane{\field{3}}$, whereas the first one is usually called the \emph{Fano plane}. The full classification is stated in Theorem~\ref{THMKompletJen} below.

Finally, we show that if the continuum hypothesis holds then there exists a universal projective plane of the smallest uncountable cardinality. It is homogeneous with respect to isomorphisms between its countable projective subplanes; this property makes it unique up to isomorphism. The full statement is Theorem~\ref{THMFajwPointFri} below.

\section{Preliminaries}

Linear spaces are typically defined as abstract incidence structures consisting of points and lines, satisfying two natural axioms (as mentioned in the introduction). A formal definition is as follows.

\begin{df}
	A \emph{linear space} is a structure of the form $\pair X \El$, where $X$ is a set, $\El$ is a family of at least 2-element subsets of $X$ satisfying the following axioms.
	\begin{enumerate}
		\item[(L0)] For each $x,y \in X$ there is $L \in \El$ such that $x,y \in L$.
		\item[(L1)] For each $L_0, L_1 \in \El$, there is at most one $x \in L_0 \cap L_1$.
	\end{enumerate}
As usual, the elements of $X$ are called \emph{points} and the elements of $\El$ are called \emph{lines}.
We adopt the usual geometric jargon like ``a line $L$ passes through $x$'' or ``a point $x$ is incident to a line $L$'', both meaning ``$x \in L$''.
\end{df}

It is clear that linear spaces can be equivalently defined as first order structures of the form $\pair X \pl$, where $\pl$ is a symmetric ternary relation representing the collinearity, namely, $\pl(x,y,z)$ means there is a line $L \in \El$ with $x,y,z \in L$. The reader can easily guess the set of axioms for $\pl$ making $\pair X \pl$ a linear space $\pair X \El$, where the lines are of the form
$$\pl(a,b) = \setof{x \in X}{\pl(a,b,x)}$$
with $a \ne b$. From now on, the symbol $\pl$ will have two meanings: a ternary relation representing collinearity and a binary operation defining lines.

\begin{df}
A line $L$ is \emph{trivial} if it consists of precisely two points. A set $S$ is \emph{independent} (with respect to $\pl$) if the collinearity is trivial on $S$, namely, $\pl(x,y,z)$ implies $x=y$ or $x=z$ or $y=z$ for every $x,y,z \in S$ (equivalently, all lines restricted to $S$ are trivial).

A linear space $X$ is \emph{trivial} if $X$ is independent in itself, that is, all lines in $X$ are trivial.
\end{df}

It is clear what a homomorphism of linear spaces should be, namely,
a function
preserving the collinearity relation. An \emph{embedding} of linear spaces is a one-to-one mapping that is at the same time an isomorphism onto its image.

\begin{df}
	A linear space $X$ will be called a \emph{closed plane} if every two lines of $X$ intersect. A linear space $X$ is \emph{non-degenerate} if it contains an independent set consisting of at least $4$ elements. Otherwise it is called \emph{degenerate}.
	A non-degenerate closed plane is called a \emph{projective plane}.
\end{df}

Degenerate linear spaces are those where the collinearity relation is total (all points are collinear) and spaces consisting of a line plus a single point. A trivial linear space is degenerate if it has at most $3$ points.

\begin{df}
	Let $\bX = \pair X \El$ be a linear space. The \emph{degree}
    of a point or a line in $\bX$ is the number of lines and points (respectively) incident to it. The \emph{degree} of $\bX$ is the maximum of degrees of points and lines in $\bX$.
	If $\bX$ is a finite projective plane, its \emph{order} is defined to be $n-1$, where $n$ is the degree of $\bX$.

	Given a natural number $n$, we shall denote by $\planesdg{n}$ the class of all linear spaces of degree at most $n$.
\end{df}

\begin{uwgi}
    It is well-known that in a projective plane, all points and lines have the same degree.
\end{uwgi}

Given a field $F$, the \emph{projective plane} over $F$, denoted by $\proplane{F}$, is the one constructed either by extending the affine plane $F^2$ or, more directly, by taking $1$-dimensional vector subspaces of $F^3$ as points and $2$-dimensional vector subspaces of $F^3$ as lines. Both definitions are well known to be equivalent.
Given a prime $p$ and a positive integer $n$, we denote by $\field{p^n}$ the unique field of order $p^n$. For every prime $p$ and for every positive integer $n$, we have an \emph{algebraic} projective plane $\proplane{\field{p^n}}$ of
order $p^n$ and degree $p^n+1$.

For a thorough treatment of projective geometry, including planes, we refer to Coxeter's monograph~\cite{CoxPP}.

\subsection{Amalgamation and homogeneity}

Let $\Ef$ be a class of mathematical structures in a fixed first order language. The language (and the class) is called \emph{relational} if there are no function symbols in the language (including constants, as $0$-ary functions).
For simplicity, in the definitions below, we assume that the language is relational.
Linear spaces are structures in the language consisting of a single ternary relation, interpreted as collinearity.

An \emph{embedding} of structures is a one-to-one homomorphism that is an isomorphism onto its image.
The following concepts (perhaps except the last
two)
are well known in model theory.

\begin{df}
	We say that $\Ef$ has the \emph{joint embedding property} if for every $X, Y \in \Ef$ there exist embeddings $\map f X V$, $\map g Y V$ with $V \in \Ef$.

	We say $\Ef$ is \emph{hereditary} if for every $X \in \Ef$, every structure isomorphic to a substructure of $X$ is in $\Ef$. In particular, a hereditary class is closed under isomorphisms.

	We say $\Ef$ has the \emph{amalgamation property} at $Z \in \Ef$ if for every two embeddings $\map f Z X$, $\map g Z Y$ with $X,Y \in \Ef$, there exist $W \in \Ef$ and embeddings $\map {f'} X W$, $\map {g'} Y W$ such that $f' \cmp f = g' \cmp g$. In this case, one also says that $Z$ is an \emph{amalgamation base} in $\Ef$.

	We say that $\Ef$ has the \emph{amalgamation
	property} if every $Z \in \Ef$ is an amalgamation base.

	We say that $\Ef$ has the \emph{cofinal amalgamation property} if for every $Z \in \Ef$ there is an embedding $\map e Z {Z'}$ such that $Z'$ is an amalgamation base in $\Ef$.

	Finally, we say that $\Ef$ has the \emph{weak amalgamation property} if for every $Z \in \Ef$ there is an embedding $\map e Z{Z'}$ with $Z'\in \Ef$, such that for every two embeddings $\map f {Z'} X$, $\map g {Z'} Y$ with $X,Y \in \Ef$, there exist $W \in \Ef$ and embeddings $\map {f'} X W$, $\map {g'} Y W$ such that $f' \cmp f \cmp e = g' \cmp g \cmp e$.
\end{df}

The weak amalgamation property, which is obviously the weakest one of the above, was introduced by Ivanov~\cite{Ivanov} in the context of generic automorphisms. Classes with the weak amalgamation property have been studied and characterized by Krawczyk and the first author~\cite{KraKub}, exhibiting connections with the universality number.

We now briefly describe the story of generic limits, including \fra\ theory.
Namely, given a class $\Ef$ of finite structures with the joint embedding property and the weak amalgamation property, there exists a unique (up to isomorphism) structure
$$U \in \ciagi{\Ef} := \bigsetof{\bigcup_{\ntr}X_n}{\ciag X \text{ is a chain in }\Ef}$$
that is called \emph{generic} over $\Ef$ and can be described as the structure obtained through a strategy in the natural game (called \emph{abstract Banach-Mazur game} \cite{KraKub}) in which two players build a chain of $\Ef$-structures, at each step extending the structure arbitrarily.
It turns out that the existence of a generic structure implies the weak amalgamation property.
Furthermore, if $\Ef$ fails the weak amalgamation property then the universality number of $\ciagi \Ef$ is the continuum. In other words, the minimal cardinality of a family $\Yu \subs \ciagi \Ef$ such that every $X \in \ciagi{\Ef}$ embeds into some $U \in \Yu$ is the continuum. See~\cite{KraKub} for details.

Once the class $\Ef$ has the amalgamation property and is hereditary, the generic structure is called the \emph{\fra\ limit} of $\Ef$ and is homogeneous with respect to $\Ef$ and universal in $\ciagi \Ef$.
Cofinal amalgamation (which is actually relevant in our considerations) is not good enough for universality: negative examples can be found in~\cite{KraKub}.

\separator

A mathematical structure $U$ is said to be \emph{homogeneous} if every isomorphism between its ``small'' substructures extends to an automorphism of $U$. Here, ``small'' typically means ``finite'' or ``finitely generated''\footnote{Homogeneity with respect to finite or finitely generated substructures is also called \emph{ultra-homogeneity}.}, although in general it could have some other meanings. For instance, in topology, homogeneity is typically understood as point-homogeneity, namely, asking that the auto-homeomorphism group (or its fixed subgroup) acts transitively on the space.

In this paper, we shall consider homogeneity with respect to finite substructures, specifically, finite subspaces of linear spaces.

Homogeneity is closely connected to amalgamation, which is actually the essence of \fra\ theory of universal homogeneous structures, see~\cite{Fraisse}, \cite{Hodges}, and for a category-theoretic treatment~\cite{Kub40}.
Finite homogeneous relational structures are easily described as follows.

\begin{prop}\label{PRPterminals}
	Assume $\Ef$ is a hereditary class of finite structures with the joint embedding property and the amalgamation property. Assume $U \in \Ef$ is such that every embedding of $U$ into any structure in $\Ef$ is an isomorphism.
	Then $U$ is homogeneous, unique up to isomorphism, and universal in $\Ef$.
\end{prop}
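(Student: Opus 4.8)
The plan is to extract all three conclusions from the single structural hypothesis on $U$, namely that every embedding of $U$ into an $\Ef$-structure is an isomorphism; call this property \emph{terminality}. The driving observation is that, since the members of $\Ef$ are finite, any embedding whose domain is $U$ lands in another finite $\Ef$-structure, and terminality then forces that embedding to be onto. All three assertions reduce to applying this remark in three different configurations.

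First I would prove universality using only the joint embedding property. Given $X \in \Ef$, joint embedding supplies $V \in \Ef$ together with embeddings $\map f X V$ and $\map g U V$. Since $g$ is an embedding of $U$ into the $\Ef$-structure $V$, terminality makes $g$ an isomorphism, so $\map {g^{-1}\cmp f} X U$ is an embedding. Hence every member of $\Ef$ embeds into $U$. Uniqueness is then immediate: if $U' \in \Ef$ is also terminal, universality of $U$ yields an embedding $\map h {U'} U$, and terminality of $U'$ makes $h$ an isomorphism, so $U \iso U'$.

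The substantive step is homogeneity, and this is where amalgamation enters. Let $A, B$ be finite subspaces of $U$ and $\map \phi A B$ an isomorphism; heredity guarantees $A, B \in \Ef$. Writing $\map {j_A} A U$ and $\map {j_B} B U$ for the inclusions, I would amalgamate the two embeddings $j_A$ and $j_B \cmp \phi$ of the base $A$. The amalgamation property at $A$ produces $W \in \Ef$ and embeddings $\map {f'} U W$, $\map {g'} U W$ satisfying $f' \cmp j_A = g' \cmp j_B \cmp \phi$. Because $W$ is again a finite $\Ef$-structure, terminality makes both $f'$ and $g'$ isomorphisms, so $\alpha := (g')^{-1} \cmp f'$ is an automorphism of $U$. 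Composing the amalgamation identity on the left with $(g')^{-1}$ gives $\alpha \cmp j_A = j_B \cmp \phi$, which says exactly that $\alpha$ restricts to $\phi$ on $A$; thus $\phi$ extends to an automorphism of $U$.

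I do not expect a genuine obstacle here: every step is a short diagram chase, and the only point requiring care is the bookkeeping in the amalgamation square — setting up the two legs $j_A$ and $j_B \cmp \phi$ correctly so that, after inverting $g'$, the resulting automorphism provably agrees with $\phi$ on $A$ rather than merely mapping $A$ into $B$.
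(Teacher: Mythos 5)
Your proposal is correct and follows essentially the same route as the paper: universality via joint embedding plus terminality, and homogeneity by amalgamating the two embeddings of $A$ (your $j_A$ and $j_B \cmp \phi$ are precisely the paper's $f$ and $g \cmp h$) and inverting one leg of the resulting square. The only cosmetic difference is that you derive uniqueness from universality of $U$ together with terminality of $U'$, whereas the paper gets it directly from the joint embedding property; both are one-line diagram chases of the same kind.
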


\begin{pf}
	Fix embeddings $\map f A U$, $\map g B U$ with $A,B \in \Ef$ and an isomorphism $\map h A B$.
	The amalgamation property gives embeddings $\map {f'} U W$, $\map {g'} U W$ such that $W \in \Ef$ and $f' \cmp f = g' \cmp g \cmp h$.
	By the assumption, $f'$, $g'$ are isomorphisms.
	Thus $(g')^{-1} \cmp f' \cmp f = g \cmp h$. In case $f,g$ are inclusions $A \subs U$, $B \subs U$, this evidently shows that $h$ extends to an automorphism $(g')^{-1} \cmp f'$. This proves that $U$ is homogeneous.

	Uniqueness follows immediately from the joint embedding property: Given $U, U'$ as above, there are embeddings $\map i U W$, $\map j {U'} W$ with $W \in \Ef$, that are actually isomorphisms, showing that $U \iso U'$.

	Finally, given $A \in \Ef$, again by the joint embedding property, there are embeddings $\map e A W$ and $\map f U W$ with $W \in \Ef$. Again, $f$ is an isomorphism, therefore $f^{-1} \cmp e$ is an embedding of $A$ into $U$. This proves universality.
\end{pf}

We shall use the fact above in the next section, proving homogeneity of the two smallest projective planes.

\section{Classifying homogeneous linear spaces}\label{SECThomFanos}

We show that, among countable linear spaces, homogeneity is a rare property. Namely, besides trivial cases, the only homogeneous linear spaces are the projective planes $\proplane{\field{2}}$ and $\proplane{\field{3}}$.

The nontrivial degenerate linear spaces are of the following forms.
\begin{enumerate}[itemsep=0pt]
	\item[(L)] A single line: all points are collinear. This includes all spaces with at most $2$ points.
	\item[(L$^+$)] A line + a point: a line plus a single point outside of this line.
\end{enumerate}

The only homogeneous ones are of course in (L). Moreover, every trivial linear space is homogeneous.
The next statement gives a strong restriction on homogeneity of general linear spaces.

\begin{lm}
	Let $\bX = \pair{X}{\El}$ be a homogeneous nontrivial linear space which is not a line. Then $\bX$ is a projective plane.
\end{lm}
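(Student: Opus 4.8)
The plan is to show that a homogeneous nontrivial linear space $\bX$ which is not a line must satisfy both defining properties of a projective plane: it must be a closed plane (every two lines meet) and it must be non-degenerate (it contains an independent set of size at least $4$). I would organize the argument around exploiting homogeneity to transport local configurations, using the fact that any isomorphism between finite subspaces extends to an automorphism of $\bX$.

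First I would extract the basic consequences of the hypotheses. Since $\bX$ is nontrivial, there is at least one nontrivial line, i.e.\ a line $L$ with at least $3$ points. I would argue that homogeneity forces \emph{every} line to be nontrivial: given any line $L'$ with two points $a,b$, the map sending two points of $L$ to $a,b$ is an isomorphism between the two-point subspaces $\{x,y\}$ and $\{a,b\}$, which extends to an automorphism $\phi$ of $\bX$; since $\phi$ carries the line through $x,y$ to the line through $a,b$, and $L$ has a third point, $L'$ must have a third point as well. Thus all lines have at least $3$ points, and similarly a degree/transitivity argument shows all points lie on at least two lines (using that $\bX$ is not a single line, so there exist two distinct lines, hence some point off a given line, hence by homogeneity every point lies on at least two lines).

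The heart of the argument is proving that $\bX$ is a closed plane. Suppose toward a contradiction that there are two disjoint lines $L_0, L_1$ (parallel lines). I would pick a suitable small configuration witnessing parallelism and a second configuration witnessing intersection, then use homogeneity to transport one to the other and derive a contradiction. Concretely: take a point $p$ on $L_0$; because lines have at least $3$ points and through $p$ there pass at least two lines, I can produce two small isomorphic subspaces --- one in which a certain pair of lines meets and one in which they are parallel --- that are abstractly isomorphic as linear spaces (same collinearity pattern on the finite point set), so the extending automorphism would have to send an intersecting pair to a parallel pair, contradicting that automorphisms preserve incidence of the intersection point. The delicate point is choosing the finite subconfigurations so that they are genuinely isomorphic as abstract linear spaces while differing in whether the two distinguished lines meet; this requires controlling exactly which triples are collinear in the chosen point sets so that the ``intersecting'' and ``parallel'' versions are indistinguishable at the finite level.

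Finally, once $\bX$ is known to be a closed plane with all lines having at least $3$ points, non-degeneracy (an independent $4$-element set) should follow, either directly from the closed-plane structure together with nontriviality, or again via homogeneity by building a quadrilateral: choose three pairwise-joined points not all collinear, then locate a fourth point in general position using that lines are long and every two lines meet. Combining closedness with non-degeneracy yields that $\bX$ is a projective plane by the definition in the Preliminaries. I expect the \textbf{main obstacle} to be the closed-plane step: making the homogeneity transport argument precise demands a careful bookkeeping of collinearity on a cleverly chosen finite point set so that the intersecting and parallel configurations are provably isomorphic yet incompatible under any incidence-preserving bijection.
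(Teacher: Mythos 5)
Your plan is essentially the paper's proof: use homogeneity to see that all lines have the same cardinality $>2$, then transport an intersecting two-line configuration onto a putative parallel pair and derive a contradiction from the image of the intersection point. The ``main obstacle'' you flag --- choosing finite configurations that are provably isomorphic yet differ in whether the distinguished lines meet --- dissolves with the right choice, and no delicate bookkeeping is needed: fix lines $L_0\cap L_1=\sn p$ (these exist since $\bX$ is not a line, so there are three non-collinear points) and pick distinct $a_0,a_1\in L_0\setminus\sn p$ and $b_0,b_1\in L_1\setminus\sn p$, possible because every line has at least three points. The point is that $\{a_0,a_1,b_0,b_1\}$ is \emph{independent} (if, say, $a_0,a_1,b_0$ were collinear, then $b_0\in L_0\cap L_1=\sn p$, contradicting the choice), and for the same reason any four points $x_0,x_1\in L_2$, $y_0,y_1\in L_3$ with $\{x_0,x_1\}\cap\{y_0,y_1\}=\emptyset$ on two lines with no common chosen point form an independent set; since independent $4$-sets carry only trivial collinearity, the map $a_i\mapsto x_i$, $b_i\mapsto y_i$ is automatically an isomorphism of finite subspaces, and the extending automorphism $h$ satisfies $\img h{L_0}=L_2$, $\img h{L_1}=L_3$, so $h(p)\in L_2\cap L_3$, proving closedness directly (no contradiction setup needed). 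Note also that the quadrilateral $\{a_0,a_1,b_0,b_1\}$ already witnesses non-degeneracy, so your final step requires no separate argument; your suggested shortcut (``closed plane plus nontriviality implies non-degenerate'') would additionally need the classification of degenerate spaces to rule out the line-plus-a-point case, which the quadrilateral makes unnecessary.
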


\begin{pf}
	First, observe that by homogeneity, for every two lines $L_0,L_1 \in \El$, there is an automorphism $\map h \bX \bX$ such that $\img h{L_0} = L_1$ (take a bijection between fixed two-element subsets of $L_0$, $L_1$ and extend it to an automorphism).
	This shows that each line has the same cardinality, which is $>2$, because $\bX$ is nontrivial.
	Since there are at least three non-collinear points, there exist lines $L_0, L_1$ with $L_0 \cap L_1 = \sn p$.
	Choose $a_0, a_1 \in L_0 \setminus \sn p$, $b_0, b_1 \in L_1 \setminus \sn p$ with $a_0 \ne a_1$, $b_0 \ne b_1$.
	Note that the set $\{a_0,a_1,b_0,b_1\}$ is independent, so in particular $\bX$ is a non-degenerate.

	Now, given two different lines $L_2, L_3$ with two pairs of points $x_0,x_1 \in L_2$, $y_0,y_1 \in L_3$, assuming $\{x_0,x_1\} \cap \{y_0,y_1\} = \emptyset$, we have an isomorphism
	$$\map{f}{\{a_0,a_1,b_0,b_1\}}{\{x_0,x_1,y_0,y_1\}}$$
	defined by $f(a_i) = x_i$, $f(b_i) = y_i$ for $i=0,1$. An automorphism extending $f$ maps $L_0$ onto $L_2$ and $L_1$ onto $L_3$, witnessing that $L_2 \cap L_3 \nnempty$. Thus $\bX$ is a projective plane.
\end{pf}

Recall that the degree of a plane is the maximal number $n$ such that every line has at most $n$ points and every point belongs to at most $n$ lines. On the other hand, the \emph{order} of a projective plane is the unique number $k$ such that every line has $k+1$ many points.

\begin{lm}
    \label{lm:homogenous_plane_degree_4}
	A homogeneous projective plane is finite of degree $\loe 4$.
\end{lm}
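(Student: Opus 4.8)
The plan is to prove the contrapositive in a uniform way: \emph{no homogeneous projective plane has a line with at least five points}. Since every line of a projective plane has the same cardinality $\kappa$, which is exactly the degree, this bounds the degree by $\kappa\loe 4$; finiteness is then automatic, because a projective plane of order $\kappa-1\loe 3$ has at most $13$ points. To set up, I would first harvest symmetry from homogeneity. If $S\subs X$ is \emph{independent} (no three collinear) or \emph{totally collinear} (contained in a single line), then \emph{every} bijection of $S$ preserves collinearity, hence extends to an automorphism of $\bX$. In particular $\aut(\bX)$ acts transitively on ordered quadrangles, and the pointwise stabiliser $G_Q$ of a quadrangle $Q=\sn{A,B,C,D}$ realises the full symmetric group on the ordinary points of each side, e.g.\ on $AB\setminus\sn{A,B,P}$ with $P=AB\cap CD$, since permuting those points while fixing $Q\cup\sn P$ preserves all collinearities of the configuration.

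The second ingredient is a rigidity principle: if an automorphism $\sigma$ fixes a set $S$ pointwise, then it fixes pointwise the whole subplane $\Cl S$ generated by $S$. Indeed, a fixed pair of points spans a setwise-fixed line, the intersection of two setwise-fixed lines is a fixed point, and iterating ``join and meet'' fixes the entire closure $\Cl S$ pointwise. Consequently no automorphism can move a point of $\Cl S$ while fixing $S$.

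The contradiction for $\kappa\goe 5$ then runs as follows. Assuming some line has at least five points, I would produce a finite configuration $\mathcal C$ together with two points $x_1,x_2\in\mathcal C$ such that, writing $S=\mathcal C\setminus\sn{x_1,x_2}$: (i) the transposition $x_1\leftrightarrow x_2$ fixing $S$ preserves collinearity \emph{within} $\mathcal C$, so it is an abstract isomorphism and, by homogeneity, extends to some $\sigma\in\aut(\bX)$ with $\sigma|_{S}=\mathrm{id}$ and $\sigma(x_1)=x_2$; yet (ii) in the ambient plane $x_2\in\Cl S$. By the rigidity principle $\sigma$ fixes $x_2$, so $\sigma(x_1)=x_2=\sigma(x_2)$ with $x_1\neq x_2$, contradicting injectivity of $\sigma$. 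The heuristic is that when lines are long ($\kappa\goe 5$) there is enough room to realise $x_2$ from $S$ through a chain of intersections whose intermediate points lie \emph{outside} $\mathcal C$; then the determination of $x_2$ by $S$ is invisible inside $\mathcal C$ and does not obstruct (i), while it is nonetheless genuine in the whole plane. For $\kappa\loe 4$ the plane is too small to hide such a construction, which is precisely why $\proplane{\field 2}$ and $\proplane{\field 3}$ remain homogeneous.

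The main obstacle is the explicit construction in the previous paragraph. One must choose $\mathcal C$ and $x_1,x_2$ so that $S$ truly generates $x_2$ while \emph{no} line spanned by a pair of points of $S$ separates $x_1$ from $x_2$ (this is what keeps the transposition an abstract isomorphism), and this has to be carried out uniformly for every projective plane, including non-Desarguesian ones and those of even order, where harmonic conjugates degenerate and so the usual single-line invariants give nothing. Controlling these accidental collinearities, and verifying that $\kappa\goe 5$ always provides the freedom needed to route the generation of $x_2$ through points outside $\mathcal C$, is the technical heart of the argument.
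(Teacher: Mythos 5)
Your proposal has the right architecture but stops precisely where the proof has to begin. The rigidity principle is correct (joins of fixed points are invariant lines, meets of invariant lines are fixed points, so an automorphism fixing $S$ pointwise fixes $\Cl{S}$ pointwise), and the overall scheme --- extend a partial isomorphism that moves one point while fixing a set $S$ whose closure already determines that point, then contradict --- is in fact exactly the shape of the paper's argument. But the crucial object, the configuration $\mathcal C$ with $x_1, x_2$ satisfying your conditions (i) and (ii) simultaneously, is never constructed; you yourself flag it as ``the technical heart of the argument.'' As written this is a plan, not a proof: nothing shows that degree $\goe 5$ suffices to realise such a configuration in an arbitrary (possibly non-Desarguesian) plane, and that is the entire content of the lemma. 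A secondary slip: if $\sigma$ extends the transposition $x_1 \leftrightarrow x_2$, then $\sigma(x_2)=x_1$, so the contradiction is with rigidity ($\sigma(x_2)=x_2$ forces $x_1=x_2$), not with injectivity as you state; and demanding a genuine transposition inside $\mathcal C$ is needlessly strong, since it forces the induced structure to be symmetric under the swap.

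For comparison, the paper closes the gap with a configuration for which your condition (i) is automatic. Fix a line $H$, points $s \ne t$ on $H$, lines $L_0,L_1,L_2 \ne H$ through $s$ and $K_0,K_1 \ne H$ through $t$, and let $S$ consist of the six points $x_{i,j} = L_i \cap K_j$. All of them lie on the two lines $K_0, K_1$, so the induced structure is just two disjoint collinear triples; hence the map fixing $S \setminus \sn{x_{0,0}}$ and sending $x_{0,0}$ to any fifth point $y \in K_0 \setminus \{x_{0,0},x_{1,0},x_{2,0},t\}$ (this is the only place degree $>4$ is used) is an isomorphism onto its image without any case analysis: every triple containing $y$ and another point of $K_0$ lies on $K_0$, and $y \notin K_1, L_1, L_2$. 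The contradiction is then your rigidity argument in concrete form: an extension $h$ fixes $s = \pl(x_{1,0},x_{1,1}) \cap \pl(x_{2,0},x_{2,1})$, yet carries $L_0 = \pl(x_{0,0},x_{0,1}) \ni s$ onto $R = \pl(y,x_{0,1})$, and $s \notin R$ --- the paper checks this via transitivity of parallelism in $P \setminus H$; equivalently, $x_{0,0} = \pl(s,x_{0,1}) \cap \pl(x_{1,0},x_{2,0})$ lies in $\Cl{S \setminus \sn{x_{0,0}}}$, so $h$ would have to fix it while also sending it to $y$. Note that only the incidence axioms are used, so your worries about non-Desarguesian planes and even order (harmonic conjugates, single-line invariants) simply do not arise. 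Your finiteness bookkeeping is fine: degree $\loe 4$ means order $\loe 3$, hence at most $13$ points.
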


\begin{pf}
	Fix a projective plane $\bP = \pair P \El$ of degree $> 4$. Fix $H \in \El$ and let us call it the \emph{horizon}.
	Choose $s \ne t$ in $H$.
	Let $L_0,L_1,L_2 \in \El \setminus \sn H$ be three different lines passing through $s$.
	Let $K_0, K_1 \in \El \setminus
	\sn H$ be two different lines passing through $t$.
	Let $S \subs P \setminus H$ be a six-point set consisting of all intersections of the lines above, outside of the horizon.
	Specifically, $S$ contains precisely the points $x_{i,j} \in L_i \cap K_j$, $i<3$, $j<2$.
	Then $S$ is a subplane of $\bP$, where $\{x_{0,0},x_{1,0},x_{2,0}\}$, $\{x_{0,1},x_{1,1},x_{2,1}\}$ are the only non-trivial lines and they are parallel.
	Since the degree of $\bP$ is $> 4$, there exists a point $y \in K_0 \setminus \{x_{0,0}, x_{1,0}, x_{2,0}, t\}$.

	The map $\map f S {P}$ defined by $f(x_{0,0}) = y$ and $f \rest S \setminus \sn{x_{0,0}} = \id{S \setminus \sn{x_{0,0}}}$ is an isomorphism onto its image
    $(S \cup \sn y) \setminus \sn{x_{0,0}}$. Suppose $h$ is an automorphism of $\bP$ extending $f$.
	Then $h(s) = s$, because the lines $L_1$, $L_2$ are invariant with respect to $h$. On the other hand, in $P\setminus H$, the line $R$ passing through $y, x_{0,1}$ cannot be parallel to $L_1$, because it intersects $L_0$ (parallelism in $P\setminus H$ is an equivalence relation), therefore $R$ intersects $L_1$ at some point of $P \setminus H$ (in particular, $s \notin R$), while at the same time $\img h {L_0} = R$. This is a contradiction.
\end{pf}

The smallest projective plane of degree $> 4$ is the one based on $\field{4}^2$, where $\field{4}$ is the unique field of cardinality $4$.
In order to get a complete classification of homogeneous linear spaces, it remains to check the two smallest projective planes. They are indeed homogeneous, as we shall see in a moment.

\subsection{The Fano plane}

The smallest projective plane $\proplane{\field{2}}$ consists of seven points and is well known under the name \emph{the Fano plane} as it is a unique projective plane of degree $3$ and order $2$.

Our goal is to prove the following:

\begin{tw}\label{THMfanoAP}
	Class $\planesdg{3}$ has the amalgamation property.
\end{tw}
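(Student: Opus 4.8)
The plan is to show that the Fano plane $F := \proplane{\field{2}}$ absorbs all amalgams, so that one may always take $W = F$. The starting point is an elementary degree count: if a point $p$ lies on $d \loe 3$ lines, then---since distinct lines through $p$ meet only in $p$ by (L1) and, by (L0), every other point is joined to $p$ by a line---the whole space is covered by the lines through $p$, whence $\abs X \loe 1 + 2d \loe 7$ (each such line carries at most two points besides $p$). Thus every member of $\planesdg 3$ has at most seven points, and $F$ is the unique one with exactly seven; in particular every embedding of $F$ into a member of $\planesdg 3$ is an isomorphism. The class is therefore finite up to isomorphism, and the whole problem reduces to a finite question about how configurations sit inside the highly symmetric plane $F$.

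The heart of the argument will be an \emph{extension (injectivity) property} of $F$: for every $Z \in \planesdg 3$, every embedding $\map e Z F$, and every extension $Z \subs X$ with $X \in \planesdg 3$, the embedding $e$ extends to an embedding $X \to F$. I would prove this by induction on $\abs{X \setminus Z}$, so that it suffices to treat a single added point $v$, i.e.\ $X = Z \cup \sn v$. The data to be realized is the collinearity pattern of $v$ with the already-placed points $e[Z]$: since $X$ has degree $\loe 3$, the point $v$ meets $Z$ through at most three lines of size at most three, and adjoining $v$ raises no degree above $3$. One then has to find a point $w \in F \setminus e[Z]$ whose Fano-lines induce exactly this pattern on $e[Z]$. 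This is verified by casework on the line pattern of $v$ (how many $3$-point lines it spans and how they meet $Z$), using that $\aut F = \mathrm{PGL}(3,2)$ acts transitively on points, on lines, and on incident point--line flags, so that each case can be normalized to a single configuration and the required $w$ exhibited directly. I expect this one-point realization to be the main obstacle, since it is precisely where the degree bound $\loe 3$ must be shown to be exactly what guarantees a consistent placement.

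Granting the extension property, the amalgamation property follows at once, and in its strong form (every $Z$ is an amalgamation base). Given embeddings $\map f Z X$ and $\map g Z Y$ with $X,Y,Z \in \planesdg 3$, first apply the extension property to the empty substructure of $Z$ to fix an embedding $\map j Z F$ (the case $Z=\emptyset$ of the extension property is exactly the universality of $F$ in $\planesdg 3$). Then extend $j$ along $f$ to an embedding $\map{f'}X F$ and, separately, along $g$ to $\map{g'}Y F$. By construction $f' \cmp f = j = g' \cmp g$, so $W = F$ together with $f', g'$ witnesses amalgamation at $Z$. The auxiliary hypotheses one later needs to invoke Proposition~\ref{PRPterminals} come for free: $\planesdg 3$ is hereditary because a subspace of a degree-$\loe 3$ space again has degree $\loe 3$, and the joint embedding property is the special case $Z = \emptyset$ of the amalgamation just established.
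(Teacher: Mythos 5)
Your strategy is sound and genuinely different from the paper's. The paper's proof of Theorem~\ref{THMfanoAP} never mentions the Fano plane: it reduces to amalgamating two one-point extensions $\bX_a,\bX_b$ of a common $\bX \in \planesdg{3}$ and resolves this by a case analysis on free versus determined points, whose engine is Lemma~\ref{LMdgoudgo} (in a degree-$\loe 3$ space a line has at most one parallel, and then both are trivial); the fact that every member of $\planesdg{3}$ embeds into the Fano plane then falls out as a \emph{consequence}, via Proposition~\ref{PRPterminals}. You invert the logic: you first establish that $F = \proplane{\field{2}}$ is injective for $\planesdg{3}$ (universality plus the one-point extension property) and then amalgamate by always taking $W=F$, with $f' \cmp f = j = g' \cmp g$. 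That derivation is formally correct, as are your preliminaries: the count $\abs{X} \loe 1+2d \loe 7$, the uniqueness of the seven-point member (each point is then forced to lie on exactly three $3$-point lines, giving the Fano plane), the observation that any embedding of $F$ into a member of $\planesdg{3}$ is onto, heredity of $\planesdg{3}$, and JEP as the $Z=\emptyset$ case. In effect you are taking exactly the route the paper acknowledges and declines -- the ``brute force'' verification -- and what it buys is a stronger statement up front (universality and homogeneity of $F$) at the price of concrete configuration checking; what the paper's route buys is a conceptual argument from which those facts follow for free.

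The gap is that the pivotal step is asserted rather than proved: the one-point realization lemma -- given $\map{e}{Z}{F}$ and $X = Z \cup \sn{v} \in \planesdg{3}$, find $w \in F \setminus e[Z]$ inducing exactly the pattern of $v$ -- is deferred to ``casework normalized by transitivity,'' and that casework is the entire technical content of the theorem. The claim is true (it is equivalent to the homogeneity and universality of $F$, which the paper derives from Theorem~\ref{THMfanoAP} together with Proposition~\ref{PRPterminals}), and the verification is finite and feasible; indeed $\mathrm{PGL}(3,2)$, of order $168 = 7\cdot 6\cdot 4$, acts sharply transitively on ordered triangles of $F$, which normalizes considerably more than the point/line/flag transitivity you invoke. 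But two points need genuine care: (i) ``exactly this pattern'' includes non-collinearities, since embeddings reflect collinearity, so $w$ must \emph{avoid} every Fano line spanned by a pair of $e[Z]$ not collinear with $v$; here the degree hypothesis on $X$ must be used to bound the possible patterns (for instance, if $v$ is free over $Z$ then each point of $Z$ contributes a distinct line through $v$, forcing $\abs{Z} \loe 3$); and (ii) when $\abs{e[Z]}=6$ the complement of $e[Z]$ in $F$ is a single point, so there is no freedom at all and one must verify that the induced structure on $e[Z] \cup \sn{w}$ is forced to agree with $X$. Until this finite verification is actually written out, you have reduced Theorem~\ref{THMfanoAP} to a true but unproven claim rather than proved it.
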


It is clear that the Fano plane does not embed into any bigger linear space of degree at most $3$, therefore Theorem~\ref{THMfanoAP} together with Proposition~\ref{PRPterminals} lead to:

\begin{wn}
	The Fano plane is homogeneous.
\end{wn}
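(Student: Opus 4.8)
The plan is to derive the corollary directly from Proposition~\ref{PRPterminals}, applied to the class $\Ef = \planesdg 3$ and the structure $U = \proplane{\field 2}$. Theorem~\ref{THMfanoAP} already supplies the amalgamation property, which is the substantial hypothesis, so what remains is routine verification of the remaining clauses: that $\planesdg 3$ is a \emph{hereditary} class of \emph{finite} structures with the joint embedding property, that $\proplane{\field 2}$ belongs to $\planesdg 3$, and that every embedding of the Fano plane into a member of $\planesdg 3$ is an isomorphism. Once these are in place, the proposition delivers homogeneity (and, as a byproduct, uniqueness and universality in $\planesdg 3$).

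First I would record that every linear space in $\planesdg 3$ is finite, indeed has at most $7$ points. Fixing a point $p$, axiom (L0) forces every other point to be collinear with $p$, hence to lie on a line through $p$; since $p$ lies on at most $3$ lines and each such line carries at most $2$ points besides $p$, there are at most $1 + 3 \cdot 2 = 7$ points altogether. Heredity is equally routine: passing to an induced subspace $Y$ of some $X \in \planesdg 3$ can only shrink lines and so cannot raise the degree, because two distinct lines of $Y$ through a common point arise as restrictions of two distinct lines of $X$ through that point. For the joint embedding property I would invoke the empty linear space, which lies in $\planesdg 3$ and embeds into every member of the class; since Theorem~\ref{THMfanoAP} makes \emph{every} member an amalgamation base, amalgamating two arbitrary structures over the empty space yields a common extension, which is exactly JEP.

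The one genuinely geometric point is the maximality of the Fano plane inside $\planesdg 3$. As $\proplane{\field 2}$ has degree $3$ it belongs to the class, and I claim it admits no proper extension there. Suppose the Fano plane appears as an induced subspace $F' \subseteq Z$ with $Z \in \planesdg 3$. Fix $p \in F'$: the three lines of $F'$ through $p$ are restrictions of three \emph{distinct} lines of $Z$ through $p$, and these already saturate the degree bound, so $p$ lies on no further line of $Z$. If some $q \in Z \setminus F'$ existed, then by (L0) the line of $Z$ joining $p$ to $q$ would have to be one of those three lines; but each of them already meets $F'$ in a full three-point Fano line, so adjoining $q$ would create a four-point line, contradicting $\deg Z \loe 3$. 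Hence $F' = Z$ and the embedding is onto.

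With every hypothesis verified, Proposition~\ref{PRPterminals} applies with $U = \proplane{\field 2}$ and yields that the Fano plane is homogeneous. The main obstacle is the maximality argument of the preceding paragraph; the finiteness bound, heredity, and JEP are bookkeeping, and the decisive amalgamation property is precisely the content of Theorem~\ref{THMfanoAP}.
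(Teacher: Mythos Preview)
Your proof is correct and follows exactly the route the paper takes: invoke Theorem~\ref{THMfanoAP} for the amalgamation property and Proposition~\ref{PRPterminals} for the conclusion, the remaining hypotheses being routine. The paper simply asserts that ``the Fano plane does not embed into any bigger linear space of degree at most~$3$'' and leaves heredity, finiteness, and JEP implicit; you have spelled out these verifications explicitly, which is harmless and arguably clearer.
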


The above corollary can also be proved by ``brute force'' (there are not so many cases to check) however we find our arguments much more illustrative. Moreover, Theorem~\ref{THMfanoAP} brings a little bit more information, saying that every plane of degree $\loe3$ embeds into the Fano plane. This fact can also be proved directly, by checking all possible isomorphic types.

The proof of Theorem~\ref{THMfanoAP} will be preceded by a simple lemma.

\begin{lm}\label{LMdgoudgo}
	Let $\bX \in \planesdg{3}$ and let $L_0 \in \El^\bX$. Then there exists at most one line parallel to $L_0$ and once this happens, $|L_0|=2$.
\end{lm}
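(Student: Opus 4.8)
The plan is to establish the two assertions in turn, dealing with the size restriction first and the uniqueness of a parallel afterwards. Throughout I use that in $\planesdg{3}$ every line has exactly $2$ or $3$ points and every point lies on at most $3$ lines, and I read ``parallel to $L_0$'' as ``disjoint from $L_0$''.

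First I would show that a line with three points has no parallel at all, which already yields the implication that a parallel forces $|L_0| = 2$. Suppose $L_0 = \{a,b,c\}$ and that $M$ is disjoint from $L_0$, and pick any $p \in M$. Since $p \notin L_0$, the three lines $\pl(p,a), \pl(p,b), \pl(p,c)$ are pairwise distinct: if two coincided, the common line would contain two of $a,b,c$ and hence equal $L_0$, forcing $p \in L_0$. Each of them also differs from $M$, because $a,b,c \notin M$. Thus $p$ lies on the four distinct lines $M, \pl(p,a), \pl(p,b), \pl(p,c)$, contradicting degree $\loe 3$. Consequently a parallel can occur only when $|L_0| = 2$.

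For the uniqueness part I would fix $L_0 = \{a,b\}$ and analyse the pencil of lines through $a$. At most two of these differ from $L_0$; call them $\alpha_1, \alpha_2$ (possibly fewer). Since every point is joined to $a$, the entire space lies in $\{a,b\} \cup \alpha_1 \cup \alpha_2$ and therefore has at most $6$ points. A parallel $M$ avoids $a$ and $b$ and meets each $\alpha_i$ in at most one point, for two common points would give $M = \alpha_i \ni a$ by axiom (L1); as the points of $M$ are joined to $a$ along $\alpha_1 \cup \alpha_2$, it follows that $|M| = 2$ with one point on each $\alpha_i$, and in particular both $\alpha_1, \alpha_2$ are present.

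It remains to exclude two distinct parallels, and I would argue by cases on how many points lie on $\alpha_1, \alpha_2$. If one of these pencil lines carries only a single point besides $a$, then every parallel must use that point, and a short count — each off-line point being joined to $b$ as well — shows that a second parallel would place that point on a fourth line, so at most one parallel survives. The decisive and, I expect, hardest case is the saturated one, in which both $\alpha_1$ and $\alpha_2$ carry three points. Here I would bring in the symmetric pencil $L_0, \beta_1, \beta_2$ through $b$ and, for each of the four off-line points, inspect the line joining it to $b$: degree $\loe 3$ forces this line to coincide with one already incident to that point, which pins down exactly which of the candidate pairs pass through $b$ and which stay parallel. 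The main obstacle is to carry this simultaneous bookkeeping across the two pencils far enough to conclude that a second parallel cannot coexist with the first without creating a point of degree $4$.
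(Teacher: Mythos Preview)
Your treatment of the first assertion --- that a three-point line admits no parallel --- is correct and essentially the same as the paper's argument.

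The gap is in the saturated case of the second assertion, and it is not a matter of incomplete bookkeeping: that case genuinely cannot be closed, because the lemma as stated is false. Take the Fano plane and delete one point. The resulting space has six points, four three-point lines, and three two-point lines; every point still has degree $3$, so it lies in $\planesdg{3}$. The three short lines are exactly the former lines through the deleted point, and since any two of them met only there, they are now pairwise parallel --- so each of them has \emph{two} parallels. In your notation this is precisely the saturated configuration: with $L_0=\{a,b\}$, $\alpha_1=\{a,p_1,p_2\}$, $\alpha_2=\{a,q_1,q_2\}$, and the pencil at $b$ given by $\{b,p_1,q_2\}$, $\{b,p_2,q_1\}$, the lines $M_1=\{p_1,q_1\}$ and $M_2=\{p_2,q_2\}$ are both parallel to $L_0$ and no point reaches degree $4$. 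The paper's own proof stumbles at the same spot: its last sentence asserts that $\{a,c,e\}$ and $\pl(b,f)$ are parallel, but in this example (where $c=p_1$, $e=p_2$, $f=q_2$) one has $\pl(b,f)=\{b,p_1,q_2\}=\{b,c,f\}$, which meets $\{a,c,e\}$ at $c$. What does survive is exactly your first paragraph, and that is the part the later proof of Theorem~\ref{THMfanoAP} actually invokes.
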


So, if $|L_0|=3$, then no other line is parallel to $L_0$.

\begin{pf}
	First, suppose $|L_0|=3$ and $L_1$ is a line parallel to $L_0$. Choose $x \in L_1$. For each $p \in L_0$ there is a unique line $\pl(x,p)$ passing through $x$, and it is
    distinct
    from $L_1$. Thus the degree of $x$ is at least $4$, a contradiction.

	Now suppose $L_0 = \dn ab$ and suppose $L_1, L_2$ are two different lines parallel to $L_0$.
	Notice that $L_1$ and $L_2$ must be parallel, because if $p \in L_1 \cap L_2$ then the lines $L_1, L_2, \pl(p,a), \pl(p,b)$ would witness that the degree of $p$ is at least $4$.

	Assume $c,d \in L_1$, $c \ne d$.
	The lines $L_0, \pl(a,c), \pl(a,d)$ are pairwise different, so the degree of $a$ is $3$ and there can be no other point on $L_1$ (also by the previous argument).
	The same applies to $b$ and $L_2$.

	Thus $L_1 = \dn cd$, $L_2 = \dn ef$ and, renaming the points if necessary, we may assume $a,c, e$ and $a,d,f$ are collinear (if, say, $a,c,e$ were not collinear then the degree of $a$ would be at least $4$).

	Finally, $\{a,c,e\}$, $\pl(b,f)$ are parallel, which contradicts the first part.
\end{pf}

\begin{pf}[Proof of Theorem~\ref{THMfanoAP}]
	It is enough to show that, given $\bX = \pair X \El \in \planesdg{3}$, every two one-point extensions of it can be amalgamated in $\planesdg{3}$. Fix one-point extensions $\bX_a, \bX_b \in \planesdg{3}$. We have two sets $X \cup \sn a$, $X \cup \sn b$, each with a
    linear space
    structure extending the one of $\bX$.
	We may assume that $\bX_a$, $\bX_b$ are not isomorphic over $\bX$, since otherwise the amalgamation is trivial, by identifying $a$ and $b$.
	In other words, we assume that the line structures of $\bX_a, \bX_b$ are different.

	We say $a$ is \emph{free} over $\bX$ if it is not collinear with any pair of distinct points of $X$. Otherwise, we say $a$ is determined by a line $L \in \El$, if $a$ is collinear with some $x_0 \ne x_1$, $x_0, x_1 \in L$. Note that in this case $L = \dn{x_0}{x_1}$, because the degree (cardinality) of each line in $\bX_a$ is $\loe3$. Of course, the same definition of being free or determined applies to $b$.

	If both $a$, $b$ are free over $\bX$ then $\bX_a$, $\bX_b$ are isomorphic over $\bX$ and there is nothing to prove.
    Thus, we may assume that $b$ is determined by the line $\dn{c_0}{c_1} \in \El$. We are now left with two cases.

	\textbf{Case $1$:} $a$ is free over $\bX$.

	If $X = \dn{c_0}{c_1}$, then the free amalgamation of $\bX_a, \bX_b$ (that is, not adding any unnecessary lines) is in $\planesdg{3}$. Otherwise, we claim that $X = \{c_0, c_1, d\}$ (and $c_0,c_1,d$ are of course not collinear). Indeed, supposing there is $x \in X \setminus \{c_0,c_1,d\}$, we would have at least four lines
	\[\pl(c_0,a), \pl(c_1,a), \pl(d,a), \pl(x,a)\]
	passing through $a$, a contradiction (recall that $a$ is free over $\bX$, so $x,d,a$ are not collinear).
	Finally, knowing that $X = \{c_0,c_1,d\}$, it suffices to make $a,b,d$ collinear in order to get a plane of degree $\loe3$.

	\textbf{Case $2$:} $a$ is not free over $\bX$.

	Assume $a$ is determined by the line $\dn{d_0}{d_1}$.
	Suppose first that the lines $\{c_0,c_1\}$, $\{d_0,d_1\}$ are parallel.
	Then $a$ is also determined by $\dn{c_0}{c_1}$, since otherwise the lines $\{d_0,d_1,a\}, \{c_0,c_1\}$ would be parallel in $\bX_a$, contradicting Lemma~\ref{LMdgoudgo}. By the same reason, $b$ is determined by both lines $\dn{c_0}{c_1}$ and $\dn{d_0}{d_1}$.
	Again by Lemma~\ref{LMdgoudgo}, neither $a$ nor $b$ is determined by a line different from $\{c_0,c_1\}$, $\{d_0,d_1\}$. Finally, $\bX_a, \bX_b$ are isomorphic over $\bX$ and we amalgamate them by identifying $a$ and $b$.

	Suppose now that $\{c_0,c_1\} \cap \{d_0,d_1\} \nnempty$.
    We may assume that $c_0 = d_0$.
	If $X = \{c_0,c_1,d_1\}$, the free amalgamation of $\bX_a$, $\bX_b$ is in $\planesdg{3}$. So suppose
	\[X = \{c_0,c_1,d_1,x,y\},\]
    where possibly $x=y$.
	Note that $c_0,x,y$ are collinear, since otherwise the degree of $c_0$ would be $>3$.
	If $x\ne y$ and neither $x$ nor $y$ is collinear with $c_1,d_1$, then $\{c_1,d_1\}$ and $\{c_0,x,y\}$ would be parallel lines in $\bX$, contradicting Lemma~\ref{LMdgoudgo}.
	Assuming $x$ is collinear with $c_1,d_1$ (and either $y=x$ or $y \ne x$), we declare $a,b,x$ to be collinear.
	Finally, in both cases the amalgamation is in $\planesdg{3}$.
	This completes the proof.
\end{pf}

\subsection{The projective plane of degree \texorpdfstring{$4$}{4}}

We are going to show that $\proplane{\field{3}}$, the unique projective plane of degree $4$ and order $3$, is homogeneous. First, we identify the class of all substructures of $\proplane{\field{3}}$.

A linear space consisting of exactly five points with only trivial lines is called the \emph{pentagon}.
By $\planesdg{4}^*$ we denote $\planesdg{4}$ with both the Fano plane and the pentagon excluded.
Note that the projective plane $\proplane{\field{3}}$ is in $\planesdg{4}^*$ and it cannot be embedded into any bigger linear space in $\planesdg{4}^*$, therefore by Proposition~\ref{PRPterminals}, once we prove that $\planesdg{4}^*$ has the amalgamation property, we obtain the homogeneity of $\proplane{\field{3}}$.

\begin{tw}\label{THM4AP}
	Class $\planesdg{4}^*$ has the amalgamation property.
\end{tw}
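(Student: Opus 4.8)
The plan is to follow the template of the proof of Theorem~\ref{THMfanoAP}. Exactly as there, the standard inductive reduction lets me assume that the structures to be amalgamated are two one-point extensions $\bX_a$, $\bX_b$ of a fixed $\bX = \pair X \El \in \planesdg{4}^*$, which I may take to be non-isomorphic over $\bX$. I again call the new point $a$ \emph{free} over $\bX$ if it is collinear with no two distinct points of $X$, and otherwise \emph{determined} by each line $L \in \El$ through two points with which $a$ is collinear; the new feature is that such an $L$ may carry two \emph{or} three points of $X$, since lines now have up to four points. The default candidate $W_0$ is the free amalgam: points $X \cup \sn a \cup \sn b$, the lines of $\bX_a$ together with those of $\bX_b$, and $\dn a b$ a trivial line. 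Just as for the Fano plane, $W_0$ satisfies (L1) precisely when $a$ and $b$ have no common determining line; the only genuinely new bookkeeping is the degree bound, since passing to $W_0$ raises $\deg a$ (and $\deg b$) by one on account of the new line $\dn a b$.

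Before the case analysis I would first extract the two structural facts that removing the pentagon and the Fano plane yields, as these are what make the class tractable. First, $\planesdg{4}^*$ contains no five-element independent set: if $p_0,\dots,p_4$ were independent and $q$ were any further point, then every line through $q$ would meet $\sn{p_0,\dots,p_4}$ in at most one of the $p_i$ (two of them on a common line would be collinear), forcing $\deg q \goe 5$; so such a configuration is exactly the pentagon. In particular a free point satisfies $\deg a = \abs X$, whence freeness forces $\abs X \loe 4$, and these few small cases are checked by hand (when both $a,b$ are free they are isomorphic over $\bX$, so we identify them). Second, by the same degree count no member of $\planesdg{4}^*$ has a Fano subplane: since any two lines of a projective plane meet, a new point could be collinear with old pairs from at most one line of a Fano subplane without violating (L1), so covering the remaining points would force degree $\goe 5$.

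With the free cases removed I may assume $a$ and $b$ are both determined, and the task is to eliminate the two obstructions to $W_0$. If $a$ and $b$ share a determining line $L$ with $\abs{L \cap X} = 2$, say $L \cap X = \dn{c_0}{c_1}$, and share no second determining line, I replace the two colliding extensions by the single four-point line $\dn{c_0}{c_1} \cup \dn a b$ and amalgamate the remaining lines through $a$ and $b$ freely; a direct check (the other lines through $a$, resp.\ $b$, meet $\dn{c_0}{c_1}$ only at $a$, resp.\ $b$) shows this preserves (L1) and leaves all degrees unchanged. If instead $\abs{L \cap X} = 3$, or if $a,b$ share two determining lines, or if a point is \emph{saturated} (of degree $4$, so $\dn a b$ cannot be added and $b$ does not fit on any short line through $a$), then the extensions can neither be merged nor kept apart, and the only option is to identify $a$ with $b$ — which is legitimate exactly when $\bX_a \iso \bX_b$ over $\bX$.

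Thus the heart of the proof, and the step I expect to be the main obstacle, is to show that in every such stuck configuration the two one-point extensions are in fact isomorphic over $\bX$, so that identification is available. This is precisely where the two facts above are used: the absence of large independent sets together with the absence of Fano subplanes should force the neighbourhood of a determined point of full degree to be so rigid that its isomorphism type over $\bX$ is pinned down by the single line $L$, leaving no room for two distinct types. Finally one observes that in every case the amalgam either contains the trivial line $\dn a b$, or contains a four-point line, or is simply $\bX_a$ itself (in the identification case); since the pentagon has only trivial lines on five points and the Fano plane has only three-point lines, the amalgam is never the pentagon or the Fano plane, and so genuinely lies in $\planesdg{4}^*$, completing the argument.
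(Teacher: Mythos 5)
Your framework (reduction to one-point extensions, the free/determined dichotomy, the free amalgam as default, merging along a shared determining line into a four-point line) matches the paper's setup, and your merge step corresponds to the paper's Cases 3--5. But the heart of your argument---the claim that in every remaining ``stuck'' configuration (shared three-point determining line, two shared determining lines, or saturated degrees) one necessarily has $\bX_a \iso \bX_b$ over $\bX$, so that identification is available---is false, and you yourself flag it as unproven. The paper's Cases 2 and 6 exhibit exactly such configurations: $a$ determined by $\pl(d_0,d_1)$ and $b$ by $\pl(c_0,c_1)$ with the four points distinct (Case 2), or the two determining lines sharing the single point $c_0$ (Case 6). There the extensions are \emph{not} isomorphic over $\bX$, identification is impossible, and when extra points of $X$ saturate the degrees of $a$ and $b$ the trivial line $\dn{a}{b}$ is also unavailable; the amalgam must instead route the line through $a$ and $b$ via carefully chosen old points---declaring $a,b,x_i$ collinear where $x_i$ is the intersection of $\pl(c_0,d_0)$ and $\pl(c_1,d_1)$ (forced, since otherwise $\deg a$ or $\deg b$ would exceed $4$), or declaring $z_1,w,a,b$ collinear in Case 6 even when only one of $z_1,w$ lies in $X$---and one must then verify that all the collinearities these choices force (the paper's lists such as $\{c_0,z_1,a,y_1\}$, $\{y_0,y_1,x_1,w\}$, and the check that the result is not the Fano plane) are mutually consistent. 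This enumeration of possible extra points and verification of forced collinearities is not a routine afterthought: it \emph{is} the proof, and your proposal defers it entirely.

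Two smaller problems. First, your justification of the ``no five-point independent set'' fact is wrong as stated: independence of $\{p_0,\dots,p_4\}$ only constrains triples \emph{inside} the set, so a line through an external point $q$ may well contain two of the $p_i$; your degree count $\deg q \goe 5$ does not follow. The fact itself is true, but for a different reason: each $p_i$ has its degree exhausted by the four lines joining it to the other $p_j$, so every line through $q$ meeting the set must contain exactly two of its points, which would partition a five-element set into pairs---a parity contradiction. Second, your Case~1 (``checked by hand'') hides a genuine subtlety the paper addresses: when $a$ is free and $|X|=4$, adding the trivial line $\dn{a}{b}$ pushes $\deg a$ to $5$, so $b$ must be placed on an existing trivial line through $a$, and the pentagon exclusion is needed to show that $X$ has enough nontrivial structure (the paper argues $\pl(x,y)$ must have three points, else $\bX_a$ would be the pentagon) for a consistent choice to exist. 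As it stands, the proposal is a correct plan for the easy cases with the decisive case analysis missing and its proposed substitute (rigidity forcing isomorphism over $\bX$) unavailable.
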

\begin{pf}
	It is enough to show that, given $\bX = \pair X \El \in \planesdg{4}^*$, every two one-point extensions of it can be amalgamated in $\planesdg{4}^*$. Fix one-point extensions $\bX_a, \bX_b \in \planesdg{4}^*$. We have two sets $X \cup \sn a$, $X \cup \sn b$, each with a plane structure extending the one of $\bX$.
	We may assume that $\bX_a$, $\bX_b$ are not isomorphic over $\bX$, since otherwise the amalgamation is trivial, by identifying $a$ and $b$.
	In other words, we assume that the line structures of $\bX_a, \bX_b$ are different.


	If both $a$, $b$ are free over $\bX$, then $\bX_a$, $\bX_b$ are isomorphic over $\bX$ and there is nothing to prove. Thus, we may assume $b$ is determined by the line
    $\pl(c_0, c_1) \in \El$. We are now left with the following cases.

	\textbf{Case $1$:} $a$ is free over $\bX$.

	Then $X$ may have at most four points.
	If $X$ consists of at most $3$ points, then the free amalgamation of $\bX_a, \bX_b$ (that is, not adding any unnecessary lines) is in $\planesdg{4}^*$.
	Suppose that $X$ has four points $c_0,c_1,x,y$. Observe that if $\pl(c_0,c_1)$ consists of exactly two points in $X$, then then the line $\pl(x,y)$ must consist of three points. Indeed, if $\pl(x,y)$ consists of only two points, then, since $a$ is free over $\bX$, the plane $\bX_a$ would be the pentagon, which is not in $\planesdg{4}^*$,
    so we may assume that $c_0\in \pl(x,y)$.
    Now it suffices to  declare $x,a,b$ (or $y,a,b$) to be collinear to obtain the amalgamation in $\planesdg{4}^*$.

	Now, assume that there is a third point in $X$, which lies on $\pl(c_0,c_1)$. Since $a$ is free over $X$, then there may be at most one more point in $X$ (denote it by $x$). Declaring $x,a,b$ to be collinear we obtain the amalgamation in $\planesdg{4}^*$.

	\textbf{Case $2$:} $a$ is determined by the line
    $\pl(d_0, d_1)$, 
    $c_i \neq d_j$ for $i,j<2$ and neither $a$ nor $b$ is determined by both lines.

	Using the fact that $\bX_a$ and $\bX_b$ are in $\planesdg{4}^*$, we infer that every point from the set $\{c_0,c_1,d_0,d_1\}$ lies on four lines either in $\bX_a$ or $\bX_b$, so it cannot lie on any other line. So, the only other points, which can be in $X$ are the following:
	\begin{itemize}[itemsep=0pt]
		\item the common point $x_0$ of lines $\pl(c_0,d_0)$ and $\pl(c_1,d_1)$;
		\item the common point $x_1$ of lines $\pl(c_0,d_1)$ and $\pl(c_1,d_0)$;
		\item the point $y_0$, which lies on the line $\pl(c_0,d_0)$, but does not lie on the line $\pl(c_1,d_1)$;
		\item the point $y_1$, which lies on the line $\pl(c_1,d_1)$, but does not lie on the line $\pl(c_0,d_0)$;
		\item the point $z_0$, which lies on the line $\pl(c_0,d_1)$, but does not lie on the line $\pl(c_1,d_0)$;
		\item the point $z_1$, which lies on the line $\pl(c_1,d_0)$, but does not lie on the line $\pl(c_0,d_1)$;
		\item the common point $w$ of lines $\pl(c_0,c_1)$ and $\pl(d_0,d_1)$;
	\end{itemize}
	Observe that if $x_0, x_1 \in X$, then both $a$ and $b$ must be determined by the line $\{x_0,x_1\}$. Otherwise, the degree of $a$ or $b$ in $\bX_a$ or $\bX_b$ would be $>4$. Hence we need to declare $a,b,x_i$ to be colinear, whenever $x_i \in X$ for $i<2$.
	(Also, since $\bX,\bX_a,\bX_b \in \planesdg{4}^*$, we have collinearity of the following points (if they appear) in appropriate planes and as a result in the amalgamation: $\{c_0,z_1,a,y_1\}$, $\{c_1,z_0,a,y_0\}$, $\{d_1,b,z_1,y_0\}$, $\{d_0,b,z_0,y_1\}$, $\{y_0,y_1,x_1,w\}$, $\{x_0,z_0,z_1,w\}.$)
	Finally, whatever points other than $c_0,c_1,d_0,d_1$ are in $X$, we have that the amalgamation is in $\planesdg{4}^*$.

	\textbf{Case $3$:} $a$ is determined by the line $\pl(c_0, c_1)$ and neither $a$ nor $b$ is determined by any other line.

	Then $\bX_a$, $\bX_b$ are isomorphic over $\bX$ and there is nothing to prove.

	\textbf{Case $4$:} $a$ is determined by the lines $\pl(c_0, c_1)$ and $\pl(d_0, d_1)$ and $b$ is also determined by $\pl(d_0, d_1)$, where $c_i \neq d_j$ for $i,j<2.$

	Then again $\bX_a$, $\bX_b$ are isomorphic over $\bX$.

	\textbf{Case $5$:} $a$ is determined by the lines $\pl(c_0, c_1)$ and $\pl(d_0, d_1)$ and $b$ is not determined by $\pl({d_0, d_1})$, where $c_i \neq d_j$ for $i,j<2$.

	If $b$ is determined by some other line, then we have the situation as in case $2$. So, assume that $b$ is not determined by any other line. Then there is at most one more point $\{x\}$ in $X$ other than $c_0,c_1,d_0,d_1$. Since degree of $d_0$ and $d_1$ in $\bX_b$ is $4$, then $x$ may lie on  the line $\pl(d_0,d_1)$ or be the intersection point of lines $\pl(c_0,d_0),\pl(c_1,d_1),$ or be the intersection point of lines $\pl(c_0,d_1),\pl(c_1,d_0).$ In any case the amalgamation with collinearity of points $a,b,c_0,c_1$ is in $\planesdg{4}^*$.
	
	\textbf{Case $6$:} $a$ is determined by the line $\pl({c_0, d_1})$ and $d_1 \neq c_1$.
	%

	First, suppose that there is the fourth point $c_2$ (other than $b$) on the line $\pl(c_0,c_1)$. But then $b$ is on the line $\pl(c_1,c_2)$, so we have a situation like in Case 2.
	Similar reasoning can be repeated when there is fourth point $d_2$ (other than $a$) on the line $\pl(c_0,d_1)$.

	Now, suppose that there is no fourth point on any of the lines $\pl(c_0,c_1), \pl(c_0,d_1)$. Then the only other points, which can be in $X$ are the following:
	\begin{itemize}[itemsep=0pt]
		\item the point $x$ which lies on the lines $\pl(c_1,a)$ (in $\bX_a$) and $\pl(d_1,b)$ (in $\bX_b)$;
		\item the point $y_0$ which lies on the line $\pl(c_1,a)$ (in $\bX_a$), but does not lie on the line $\pl(d_1,b)$ (in $\bX_b)$;
		\item the point $y_1$ which lies on the line $\pl(d_1,b)$ (in $\bX_b$), but does not lie on the line $\pl(c_1,a)$ (in $\bX_a)$;
		\item points $z_0$ and $z_1$ which lie on the line $\pl(c_1,d_1)$;
		\item the point $w$, which does not lie on the line $\pl(c_1,d_1)$ and also does not lie neither on the line $\pl(d_1,b)$ (in $\bX_b)$ nor on the line $\pl(c_1,a)$ (in $\bX_a)$.
	\end{itemize}
	Suppose that $x \in X$. If $y_0 \in X$ or $y_1 \in X$, then we have a situation like in case $2$. So, suppose that $y_0,y_1 \notin X$. If we have a point on $\pl(c_1,d_1)$ which lies on $\pl(c_0,x)$, then call it $z_0$ (the other point on $\pl(c_1,d_1)$ is called $z_1$). If also $w \in X$, then it is easy to see that $w \in \pl(c_0,x)$ (otherwise $w$ would have degree $>4$ in $\bX_a$). Observe that if $z_1,w \in X$, then $a$ and $b$ must be determined by $\pl(z_1,w)$ (otherwise $w$ would have degree $>4$ in $\bX_a$). Thus, we declare points $z_1,w,a,b$ to be collinear in the amalgamation (even if only one of $z_1$, $w$ is in $X$). Then the amalgamation is in $\planesdg{4}^*$, in particular, it is not the Fano plane (we do not declare $a,b,z_0$ to be collinear).

	Now, suppose that $x \notin X$. If also $y_0,y_1 \notin X$, then we amalgamate as above. If both $y_0,y_1 \in X$, then we have a situation like in case $2$. So, without loss of generality, assume that $y_0 \in X$ and $y_1 \notin X$. If we have a point on $\pl(c_1,d_1)$ which lies on $\pl(c_0,y_0)$, then call it $z_1$ (the other point on $\pl(c_1,d_1)$ is called $z_0$). As before, we have that points $c_0,z_0,w$ are collinear (if they all appear in $X$) and that if $z_1,w \in X$, then $a$ and $b$ must be determined by $\pl(z_1,w)$ (otherwise, $a$ or $b$ would lie on more than $4$ lines). Thus, we again declare points $z_1,w,a,b$ to be collinear in the amalgamation (even if only one of $z_1$, $w$ is in $X$). It is easy to see that if $z_0 \in X$, then $b$ must be determined by $\pl(z_0,y_0)$  (otherwise, $y_0$ would lie on more than $4$ lines). Also, if $w \in X$, then
$w,d_1,y_0$ are collinear (otherwise, again $y_0$ would lie on more than $4$ lines). Finally, we have that the amalgamation is in $\planesdg{4}^*$, which finishes the proof.
\end{pf}

\begin{wn}
	The projective plane $\proplane{\field{3}}$ is homogeneous.
\end{wn}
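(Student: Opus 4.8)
The plan is to apply Proposition~\ref{PRPterminals} to the class $\Ef = \planesdg{4}^*$ with the distinguished object $U = \proplane{\field{3}}$. That proposition requires $\Ef$ to be hereditary with the joint embedding and amalgamation properties, together with an object $U \in \Ef$ such that every embedding of $U$ into a member of $\Ef$ is an isomorphism; its conclusion then gives homogeneity (and, as a bonus, uniqueness and universality). The amalgamation property is exactly the content of Theorem~\ref{THM4AP}, so all that remains is to discharge the three bookkeeping hypotheses, none of which involves the case analysis underlying Theorem~\ref{THM4AP}.

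First I would check that $\planesdg{4}^*$ is hereditary. A subspace of a space of degree $\loe 4$ again has degree $\loe 4$, so the only danger is that the Fano plane or the pentagon occurs as a \emph{proper} subspace of some member of $\planesdg{4}^*$. I would rule this out by showing that both are maximal within degree $\loe 4$: adjoining one point $p$ to either forces some point onto a fifth line. For the Fano plane, $p$ can extend at most one of its pairwise-meeting lines (two such extensions would put $p$ together with the intersection point on two common lines, violating (L1)), and must then be joined to the remaining four points by separate two-point lines, so $p$ itself acquires degree $5$. For the pentagon, every line through $p$ meets at most two of the five independent points, and to avoid raising any point's degree each such line must extend a pentagon edge and hence cover exactly two of them; since $5$ is odd, at least one point is left to be joined by a fresh line and reaches degree $5$. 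Thus a Fano or pentagon subspace of some $X \in \planesdg{4}^*$ would force $X$ itself to equal the Fano plane or the pentagon, contradicting $X \in \planesdg{4}^*$. The joint embedding property then follows formally from amalgamation, since the empty linear space lies in $\planesdg{4}^*$ and serves as an amalgamation base over which any two members can be joined.

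Finally, I would verify the terminality of $\proplane{\field{3}}$ in $\planesdg{4}^*$: it lies in the class (being neither the Fano plane nor the pentagon), and since it is a projective plane of degree exactly $4$, each of its lines already carries $4$ points and each point already lies on $4$ lines. A new point could therefore not be placed on any existing line without creating a line of size $5$, so it would have to be joined to all thirteen points by fresh two-point lines, giving it degree $13 > 4$. Hence $\proplane{\field{3}}$ has no proper extension inside $\planesdg{4}^*$, so every embedding of it into a member of $\planesdg{4}^*$ is onto, i.e.\ an isomorphism. With all hypotheses in place, Proposition~\ref{PRPterminals} yields that $\proplane{\field{3}}$ is homogeneous. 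The only genuinely non-formal step is the heredity check — confirming that excising the two maximal spaces from $\planesdg{4}$ does not destroy closure under subspaces; everything else is an immediate application of the machinery already assembled.
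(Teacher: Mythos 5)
Your proposal is correct and follows exactly the paper's route: the paper derives this corollary by applying Proposition~\ref{PRPterminals} to $\planesdg{4}^*$, using Theorem~\ref{THM4AP} for amalgamation and the observation that $\proplane{\field{3}}$ cannot be embedded into any strictly larger member of $\planesdg{4}^*$. The only difference is that you explicitly discharge the bookkeeping hypotheses (heredity via maximality of the Fano plane and the pentagon in $\planesdg{4}$, joint embedding over the empty space, and terminality of $\proplane{\field{3}}$) that the paper treats as evident, and these verifications are all correct.
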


\subsection{The classification}

Now we know that the only homogeneous nontrivial and non-degenerate linear spaces are the two smallest projective planes $\proplane{\field{2}}$ and $\proplane{\field{3}}$. This leads to the final classification result.

\begin{tw}\label{THMKompletJen}
	Homogeneous linear spaces are one of the following form.
	\begin{enumerate}[itemsep=0pt]
		\item[{\rm(T)}] Trivial spaces (the collinearity is trivial).
		\item [{\rm(L)}] Lines (the collinearity is total).
		\item[{\rm(P$_2$)}] The projective plane $\proplane{\field{2}}$.
		\item[{\rm(P$_3$)}] The projective plane $\proplane{\field{3}}$.
	\end{enumerate}
\end{tw}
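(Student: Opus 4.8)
The plan is to prove the classification as a two-way statement: that each of the four listed types is indeed homogeneous, and conversely that a homogeneous linear space must be of one of these forms. Almost all of the content of the reverse implication has already been assembled in the preceding results, so I would begin with the ``easy'' direction. Homogeneity of $\proplane{\field{2}}$ and of $\proplane{\field{3}}$ is exactly the content of the two corollaries drawn from Theorems~\ref{THMfanoAP} and~\ref{THM4AP}. For the trivial spaces (T) and the lines (L) I would argue directly: in a trivial space every bijection of the point set is an automorphism, and in a line (where collinearity is total) every bijection likewise preserves collinearity. Hence any isomorphism between finite subspaces $A \to B$ is merely a bijection of finite sets, and it extends to a bijection of the whole point set $X$ because $X\setminus A$ and $X\setminus B$ have equal cardinality (both equal to $|X|$ when $X$ is infinite, and both equal to $|X|-|A|$ when $X$ is finite). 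This establishes that all four types are homogeneous.

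For the forward implication I would take a homogeneous linear space $\bX$ and split into cases. If $\bX$ is trivial, it is of type (T). If $\bX$ is nontrivial and all of its points are collinear, then $\bX$ is a single line, of type (L). In the only remaining case, $\bX$ is nontrivial and is not a line, so the lemma asserting that a homogeneous nontrivial non-line linear space is a projective plane applies and forces $\bX$ to be a projective plane. A pleasant feature of this step is that it simultaneously disposes of the degenerate ``line $+$ point'' configuration (L$^+$): if such a space were homogeneous, that lemma would make it a projective plane, contradicting the fact that projective planes are by definition non-degenerate. Thus (L$^+$) never arises.

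Having reduced to a homogeneous projective plane, I would invoke Lemma~\ref{lm:homogenous_plane_degree_4} to conclude that $\bX$ is finite of degree $\loe 4$. Since every line of a projective plane carries at least three points, the degree is $\goe 3$, leaving only degrees $3$ and $4$. The classification is then completed by the classical uniqueness of the two smallest projective planes, as recalled at the start of the respective subsections: the unique projective plane of degree $3$ (order $2$) is the Fano plane $\proplane{\field{2}}$, and the unique projective plane of degree $4$ (order $3$) is $\proplane{\field{3}}$. Hence $\bX$ is of type (P$_2$) or (P$_3$), which exhausts the possibilities.

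Since the substantive mathematics — homogeneity of the two small planes, the non-line/projective-plane dichotomy, and the degree bound — is entirely carried by results already proved, there is no serious obstacle beyond correct assembly of the cases; the one genuinely external ingredient is the classical uniqueness of projective planes of orders $2$ and $3$, so the only point demanding care is matching ``degree $n$'' with ``order $n-1$'' and citing that uniqueness correctly.
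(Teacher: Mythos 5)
Your proposal is correct and follows essentially the same route as the paper: the forward direction is exactly the assembly of the lemma that a homogeneous nontrivial non-line space is a projective plane with Lemma~\ref{lm:homogenous_plane_degree_4} and the classical uniqueness of the planes of orders $2$ and $3$, while the converse is the paper's two corollaries (via Theorems~\ref{THMfanoAP} and~\ref{THM4AP}) plus the easy observations for types (T) and (L). Your explicit extension argument for (T) and (L) and the remark disposing of (L$^+$) merely spell out details the paper leaves implicit, so there is no substantive difference.
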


Every linear space has its dual, obtained by interchanging points and lines. For example, the dual of a space consisting of a single line is a pencil of trivial lines, namely a set of lines passing through a common point. Of course, this is not homogeneous anymore, as we define homogeneity with respect to the set of points.
We may also look at a stronger homogeneity, when linear spaces are viewed as bipartite graphs. Here the homogeneity easily fails, as the following example shows.

\begin{ex}
	Let $X$ be a linear space with $3$ collinear points $a,b,c$ and with another point $d$ not collinear with $a,b,c$. We have a bijection
    $\map{f}{\{a,b,c\}}{\{a, b, d\}}$.
	Assuming $X$ is viewed as a bipartite graph and $a,b,c,d$ are vertices on the left-hand side, $f$ is a partial isomorphism. Obviously, it cannot be extended to an automorphism of $X$, as the collinearity relation becomes relevant.
\end{ex}

The next question is which finite linear spaces can be amalgamation bases. We give the complete answer in the next section. There is a rather old conjecture that, to our knowledge, is still open.

\begin{enumerate}
	\item[(\pfp)] Every finite linear space embeds into a finite projective plane.
\end{enumerate}

It is not clear who first posed this as an open problem. For convenience, we will call it \emph{Conjecture} (\pfp).

In the next section we show that this conjecture is equivalent to the weak amalgamation property of finite linear spaces and therefore its possible negative answer would imply that there is no universal countable projective plane.


\section{Amalgamating finite linear spaces}

The goal here is to show that the amalgamation bases in the class of finite linear spaces are exactly the finite projective planes (Proposition~\ref{proposition:projective_iff_amalgamation_base}), from which we deduce that the cofinal, as well as weak, amalgamation for the class of finite linear spaces is equivalent to the statement that every such space can be embedded in a finite projective plane (Theorem~\ref{theorem:wap_cap_planes}) which, to our knowledge, remains an open problem in incidence geometry \cite{Beutel}.

\begin{df}
	If $f\colon A\to B$ is an embedding of linear spaces, then we say that:
	\begin{itemize}
		\item
		$f$ is an \emph{elementary planarisation} if $B\setminus A= \{b\}$ and some two lines in $A$ intersect at $b$,
		\item
		$f$ is a \emph{trivial elementary planarisation} if it is elementary and the new point $b$ lies on exactly two lines existing in $A$ (i.e.\ there are two lines $L_1, L_2$ in $A$ such that $L_1\cup \{b\}$, $L_2\cup \{b\}$ are lines in $B$ and all other lines in $B$ containing $b$ are trivial),
		\item
		$f$ is a \emph{planarisation} if there is a (possibly transfinite) sequence $A\to B_0\to B_1\to \ldots$ of elementary planarisations whose colimit is $B$,
		\item
		$f$ is a \emph{trivial planarisation} if its constituent elementary planarisations are all trivial,
		\item
		$f$ is a \emph{complete planarisation} if $f$ is a planarisation and $B$ is a projective plane (equivalently, if it is a maximal planarisation),
		\item
		a \emph{planar closure} of $A$ in $B$ is a maximal planarisation $A\to \bar A$ through which $f$ factors,
		\item
		$f$ is an \emph{aplanar} embedding if $A$ is the planar closure of $A$ in $B$ (equivalently, any two lines in $A$ intersecting in $B$ already intersect in $A$).
	\end{itemize}
\end{df}

\begin{uwgi}
	The planar closure of $A$ in $B\supseteq A$ can be recursively constructed as follows: start with $A_0=A$, then for each $n$, let $A_{n+1}$ be the union of $A_n$ and all the intersections of lines in $A_n$ in $B$, and put $\bar A\coloneqq\bigcup_n A_n$. In particular, planar closures always exist and are unique.
\end{uwgi}

\begin{uwgi}
	A composition of planarisations is a planarisation. In particular, if $A\to \bar A$ is the planar closure of $A\to B$, then the factor map $\bar A\to B$ is aplanar, and planar closure is a closure operator.
\end{uwgi}

\begin{prop}
	\label{proposition:trivial_step}
	If $(L_1 L_1'), (L_2, L_2'), \ldots, (L_n, L_n')$ are distinct parallel pairs in a linear space $A$, then there is a trivial planarisation $A'=A\cup (a_1,\ldots, a_n)$ such that for each $i$, $a_i$ lies on the extensions of $L_i$ and $L_i'$ and all other lines containing any $a_i$ (including $\pline(a_i, a_j)$ for any $j\neq i$) are trivial. In particular, if $(L, L')$ is a pair of parallel lines in $A$ distinct from $(L_i, L_i')$ for all $i$, then $L, L'$ do not intersect in $A'$.
\end{prop}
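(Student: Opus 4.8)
The plan is to exhibit $A\to A'$ as a finite composition of $n$ trivial elementary planarisations, adjoining the points $a_1,\dots,a_n$ one at a time. Put $B_0=A$; given $B_{i-1}$, form $B_i=B_{i-1}\cup\{a_i\}$ by extending $L_i$ and $L_i'$ to pass through a single new point $a_i$, and joining $a_i$ to every other point $x$ of $B_{i-1}$ (including each earlier $a_j$) by a fresh trivial line $\{x,a_i\}$ whenever $x$ lies on neither extension. Finally $A'=B_n$. Throughout I use the distinctness of the pairs in the strong sense that no line occurs in two of them, so that the $2n$ lines $L_1,L_1',\dots,L_n,L_n'$ are pairwise distinct; this is exactly what the conclusion $\pline(a_i,a_j)=\{a_i,a_j\}$ requires, since otherwise a single extended line would contain two of the new points.

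First I would check that every step $B_{i-1}\to B_i$ is a trivial elementary planarisation. The lines $L_i,L_i'$ remain parallel in $B_{i-1}$: they are parallel in $A$ and, since no line lies in two pairs, neither has been extended at an earlier step; hence making them meet at the new point $a_i$ is a legitimate elementary planarisation. The linear-space axioms are then immediate: (L0) holds since every point of $B_{i-1}$ is joined to $a_i$ either by an extended line or by a trivial line, and (L1) holds since any two new lines through $a_i$ meet only at $a_i$ — the two extensions because $L_i,L_i'$ are parallel in $B_{i-1}$, an extension and a trivial line $\{x,a_i\}$ because $x$ lies on neither extension, and two trivial lines because they share only $a_i$. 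Exactly the two lines $L_i,L_i'$ get extended and all remaining lines through $a_i$ are trivial, so the step is trivial; composing the $n$ steps exhibits $A\to A'$ as a trivial planarisation.

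Next I would verify the two assertions about $A'$. Each auxiliary line through $a_i$ was created trivial and stays trivial: a later point $a_k$ is placed only on the extensions of $L_k,L_k'$, and a trivial line $\{x,a_i\}$ is not the extension of any line of $A$, so $a_k$ is never adjoined to it. In particular $\pline(a_i,a_j)=\{a_i,a_j\}$, the decisive point being that $a_j$ lies on neither the extension of $L_i$ nor that of $L_i'$ — which is exactly where the no-shared-line hypothesis enters. For the \emph{in particular} clause, let $(L,L')$ be a parallel pair of $A$ different from every $(L_i,L_i')$. In $A'$ the extension of $L$ acquires only the points $a_i$ with $L\in\{L_i,L_i'\}$, and similarly for $L'$; as $L$ and $L'$ share no old point, any common point in $A'$ would be some $a_i$ lying on both extensions, forcing $\{L,L'\}=\{L_i,L_i'\}$, contrary to hypothesis. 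Hence $L$ and $L'$ stay parallel.

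The only genuine work is the bookkeeping inside the triviality check: one must be sure that adjoining $a_i$ creates no unintended collinearity — neither placing $a_i$ on a third line of $A$ nor making two of the new points collinear with old points — and this is precisely what the distinctness of the parallel pairs controls. Everything else reduces to a direct verification of (L0) and (L1).
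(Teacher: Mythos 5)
Your proof is correct and takes essentially the same approach as the paper's, which simply declares the structure on $A\cup\{a_1,\ldots,a_n\}$ obvious and verifies correctness by checking transitivity of collinearity, noting that all lines through the new points other than the extensions of the $L_i, L_i'$ are trivial, so every problematic triple reduces to one already in $A$. You are merely more explicit on two points the paper glosses over: the decomposition into $n$ trivial elementary planarisations (which the definition of trivial planarisation formally requires) and the reading of ``distinct pairs'' as sharing no line, which is indeed forced by the stated conclusion that $\pline(a_i,a_j)$ is trivial.
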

\begin{proof}
	The definiton of the structure is obvious. Correctness is straightforward: when checking transitivity, we need to check collinear triples with two points in common. But the lines connecting the new points are all trivial, so all such triples will have two points already present in $A$, so transitivity easily follows from transitivity in $A$ (and the assumption that the pairs of lines intersecting at the new points were parallel in $A$).
\end{proof}

\begin{uwgi}
	Using Proposition~\ref{proposition:trivial_step} one can show that every linear structure $A$ admits a complete trivial planarisation $\tilde A$ (which is infinite, if $A$ is not already a projective plane); see Lemma~\ref{LMghyio}.
\end{uwgi}

\begin{prop}
	\label{proposition:intersect_three_lines}
	If $L_1, L_2, L_3, \ldots, L_n$ are pairwise parallel lines in $A$ (with $n\geq 3$), then we can find a (nontrivial) planarisation $A'=A\cup \{a\}\supseteq A$ such that the lines containing $L_i$ are concurrent in $A'$ (namely, they contain $a$).
\end{prop}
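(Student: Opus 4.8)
The plan is to produce $A'$ as a single \emph{elementary} planarisation: I adjoin one new point $a$, place it on every $L_i$, and introduce no collinearity beyond what the axioms force. Concretely, I would take the lines of $A'=A\cup\sn a$ to be the extended lines $L_i\cup\sn a$ for $i\le n$, every line of $A$ other than $L_1,\dots,L_n$ (left unchanged), and one trivial line $\dn a x$ for each point $x\in A$ lying on none of the $L_i$. Since $L_1,\dots,L_n$ are pairwise parallel, every point of $A$ lies on at most one of them, so this prescription is unambiguous, and the only point added is $a$.

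Next I would verify the two axioms. Axiom (L0) is immediate: a pair of points of $A$ is joined by the line already present in $A$, whereas a pair $a,x$ is joined by $L_i\cup\sn a$ when $x\in L_i$ for the unique such $i$, and by the trivial line $\dn a x$ otherwise. For axiom (L1) I would run through the types of pairs of distinct lines. Two unextended lines meet exactly as they did in $A$; an unextended line $M$ and an extended line $L_i\cup\sn a$ can only meet inside $A$, since $a\notin M$; and a new trivial line $\dn a x$ meets any other line through $a$ only at $a$, because its sole other point $x$ lies on none of the $L_i$ and on no other $\dn a y$.

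The crux — and the only place where the hypothesis is used — is the pair of extended lines $L_i\cup\sn a$ and $L_j\cup\sn a$ with $i\ne j$. In $A$ these are parallel, so $L_i\cap L_j=\emptyset$, whence in $A'$ their intersection is exactly $\sn a$, a single point, as (L1) demands. This is precisely why pairwise parallelism cannot be dropped: had two of the $L_i$ already met at some $p\in A$, the extended lines would share both $p$ and $a$, violating (L1). With (L0) and (L1) confirmed, $A'$ is a linear space; since $A'\setminus A=\sn a$ and the lines $L_1\cup\sn a$, $L_2\cup\sn a$ meet at $a$, the inclusion $A\to A'$ is an elementary planarisation, hence a planarisation. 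It is nontrivial because, as $n\ge 3$, the new point $a$ lies on at least three existing lines, so the planarisation is not trivial in the sense of the definition. I expect no serious obstacle beyond the careful bookkeeping in the (L1) case analysis; the single genuine idea is that parallelism of the $L_i$ keeps the extended lines pairwise meeting only at $a$.
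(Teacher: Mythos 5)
Your construction is exactly the one the paper intends: the paper's proof of Proposition~\ref{proposition:intersect_three_lines} simply says it is straightforward, analogous to Proposition~\ref{proposition:trivial_step}, and you have carried out precisely that one-point elementary planarisation, with the verification of (L0) and (L1) correctly isolating pairwise parallelism as the reason the extended lines meet only at $a$. Your proof is correct and takes essentially the same approach, merely spelling out the routine case analysis the paper leaves implicit.
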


\begin{proof}
	Similarly to Proposition~\ref{proposition:trivial_step}, the proof is straightforward.
\end{proof}

\begin{lm}
	\label{lemma:planar_epimorphism}
	Planarisations are epimorphisms in the category of linear spaces with embeddings, i.e.\ if $f\colon A\to A'$ is a planarisation and $g_1,g_2\colon A'\to C$ are embeddings such that $g_1\circ f= g_2\circ f$, then $g_1=g_2$.
\end{lm}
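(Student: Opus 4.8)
The plan is to reduce everything to a single elementary planarisation, where the new point is literally the intersection of two lines, and is therefore pinned down by the two axioms (L0), (L1).

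First I would unwind the definition: $A'$ is the colimit of a (possibly transfinite) sequence of elementary planarisations, which I index as $(B_\alpha)_{\alpha\loe\kappa}$ with $B_0=A$, $B_\kappa=A'$, each $B_{\alpha+1}$ an elementary planarisation of $B_\alpha$, and $B_\lambda=\bigcup_{\alpha<\lambda}B_\alpha$ at limit ordinals $\lambda$. I would then prove, by transfinite induction on $\alpha$, that $g_1\rest B_\alpha=g_2\rest B_\alpha$. The case $\alpha=0$ is exactly the hypothesis $g_1\cmp f=g_2\cmp f$, and limit stages follow at once, since $B_\lambda$ is the union of the earlier stages on which the two maps already agree. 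Hence the whole argument rests on the successor step.

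For the successor step I would assume $g_1$ and $g_2$ agree on $B:=B_\alpha$ and check that they agree on the unique new point $b$ of the elementary planarisation $B\to B\cup\sn b=B_{\alpha+1}$. By definition there are two lines $L_1,L_2\in\El^{B}$ whose extensions meet at $b$, and these are parallel in $B$: if they met at some $c\in B$ they would meet again at $b$, contradicting (L1). I fix distinct $p_i,q_i\in L_i$ for $i=1,2$. Since each $g_j$ is an embedding and $b$ is collinear with $p_i,q_i$, the point $g_j(b)$ lies on the unique line $M_i$ of $C$ through $g_1(p_i)=g_2(p_i)$ and $g_1(q_i)=g_2(q_i)$; thus $g_1(b)$ and $g_2(b)$ both lie on $M_1\cap M_2$. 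It remains to see that $M_1\ne M_2$. If $M_1=M_2$, the four points $g_1(p_1),g_1(q_1),g_1(p_2),g_1(q_2)$ would be collinear in $C$, whence (as $g_1$ is an isomorphism onto its image) $p_1,q_1,p_2,q_2$ would be collinear in $B$; these four points are distinct because $L_1,L_2$ are parallel, so this would force $L_1=\pl(p_1,q_1)=\pl(p_2,q_2)=L_2$, a contradiction. Therefore $M_1\ne M_2$, and (L1) forces $g_1(b)=g_2(b)$, which closes the successor step and the induction.

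The only thing requiring care is the bookkeeping of the transfinite sequence, specifically observing that the two lines witnessing the $\alpha$-th elementary planarisation already live in $B_\alpha$, where $g_1,g_2$ coincide by the inductive hypothesis; this is immediate from the definition of an elementary planarisation, so no genuine obstacle arises. The real content is simply that each added point is the unique intersection of two previously parallel lines, and uniqueness of intersection points is precisely axiom (L1).
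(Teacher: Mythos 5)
Your proof is correct and takes essentially the same route as the paper's: reduce to a single elementary planarisation, where the new point is pinned down as the unique intersection (by axiom (L1)) of the two lines in $C$ determined by the images of the witnessing lines, and then handle general planarisations by (transfinite) induction with trivial limit stages. You merely spell out two details the paper leaves implicit, namely the bookkeeping of the transfinite sequence and the verification that $M_1\ne M_2$.
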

\begin{proof}
	Suppose for simplicity that $f$ is an inclusion. If $A=A'$, the conclusion is trivial.

	Otherwise, if $A'=A\cup \{a\}$ (so that $f$ is an elementary planarisation) and $L_1,L_2$ are lines in $A$ intersecting at $a$ in $A'$. Note that $g_1(a)$ is the intersection of the unique lines in $C$ containing $g_1[L_1]=g_1\circ f[L_1]=g_2\circ f[L_1]=g_2[L_1]$ and $g_1[L_2]=g_2[L_2]$, as is $g_2(x)$, so $g_1(a)=g_2(a)$, whence $g_1=g_2$.

	Finally, if $f$ is a non-elementary planarisation, then the conclusion follows by an easy inductive argument.
\end{proof}

\begin{uwgi}
	\label{remark:lifting_amalgams}
	Note that Lemma~\ref{lemma:planar_epimorphism} easily implies that if $A\to A'$ is a planarisation and $B_1, B_2$ are extensions of $A'$, then any amalgam $C$ of $B_1, B_2$ over $A$ is also an amalgam over $A'$: in the diagram below, $g_1\circ f_1\circ f=g_2\circ f_2\circ f$, and since $f$ is an epimorphism, $g_1\circ f_1=g_2\circ f_2$.
	\begin{raggedright}
		\begin{tikzcd}
			&C&\\
			B_1\ar[ur,"g_1"]&&B_2\ar[ul,"g_2", swap]\\
			&A'\ar[ul,"f_1"]\ar[ur,"f_2", swap]\\
			&A\ar[u,"f"]
		\end{tikzcd}
	\end{raggedright}
\end{uwgi}

\begin{lm}
	\label{lemma:nontrivial_incomaptible}
	If $A$ is a finite linear space and $A\to A'$ is a finite nontrivial planarisation, then we can find a finite trivial planarisation $A\to A''$ such that $A', A''$ cannot be amalgamated over $A$.
\end{lm}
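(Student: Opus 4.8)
The plan is to locate a single ``bad'' concurrence inside the given planarisation and to build a trivial planarisation $A''$ that disagrees with it. Write $A\to A'$ as a finite composition of elementary planarisations $A=A_0\to A_1\to\cdots\to A_N=A'$ and let $A_k\to A_{k+1}$ be the \emph{first} nontrivial elementary step, adding a point $b$. By the definition of nontriviality, $b$ lies on at least three lines $M_1,M_2,M_3$ of $A_k$; these are pairwise parallel in $A_k$, since if two of them met at a point $q\in A_k$ they would meet at both $q$ and $b$ in $A_{k+1}$, violating (L1). Moreover $A\to A_k$ is a trivial planarisation, all earlier steps being trivial by minimality of $k$. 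I would then define $A''$ to be the trivial elementary planarisation of $A_k$ adding a single point $c$ on (the extensions of) $M_1$ and $M_2$ only, with every other line through $c$ trivial; its correctness as a linear space is precisely Proposition~\ref{proposition:trivial_step} applied to the single parallel pair $(M_1,M_2)$. Since $A\to A_k$ and $A_k\to A''$ are both trivial, the composite $A\to A''$ is a finite trivial planarisation, as demanded, and by construction $c\notin M_3$ in $A''$.

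The key reduction is to upgrade ``amalgam over $A$'' to ``amalgam over $A_k$''. Suppose toward a contradiction that $A',A''$ admit an amalgam: a linear space $W$ with embeddings $\alpha\colon A'\to W$ and $\beta\colon A''\to W$ agreeing on $A$. Since $A\to A_k$ is a planarisation and both $A'$ and $A''$ are extensions of $A_k$, Remark~\ref{remark:lifting_amalgams} (which rests on Lemma~\ref{lemma:planar_epimorphism}, that planarisations are epimorphisms) guarantees that $\alpha$ and $\beta$ already agree on all of $A_k$; write $\varphi$ for their common restriction to $A_k$. This is the heart of the matter: the witnessing lines $M_1,M_2,M_3$ and the point $b$ live in $A_k\setminus A$, so without this lifting step the images of their points under $\alpha$ and $\beta$ could differ and the argument would collapse.

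With $\varphi$ in hand I would finish by a ``two lines meet once'' argument. In $W$ the lines spanned by $\varphi(M_1)$ and $\varphi(M_2)$ are distinct (because $\varphi$ is an embedding, hence reflects collinearity), and both contain $\alpha(b)$, as $M_1,M_2$ pass through $b$ in $A'$, and both contain $\beta(c)$, as $M_1,M_2$ pass through $c$ in $A''$; by (L1) this forces $\alpha(b)=\beta(c)$. Now pick two distinct points $u,u'\in M_3$, which lie in $A_k$. Since $M_3$ passes through $b$ in $A'$, the line of $W$ through $\varphi(u),\varphi(u')$ contains $\alpha(b)=\beta(c)$, so $\beta(u),\beta(u'),\beta(c)$ are collinear in $W$. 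As $\beta$ reflects collinearity, $u,u',c$ would be collinear in $A''$; but the line through $u,u'$ in $A''$ is the extension of $M_3$, which does not contain $c$. This contradiction shows that no amalgam of $A',A''$ over $A$ exists.

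The combinatorics here is light, and I expect the one genuinely delicate point to be the reduction from amalgamation over $A$ to amalgamation over $A_k$: the concurrence being exploited is created in the middle of the planarisation rather than over the base, so the agreement of $\alpha,\beta$ on the base $A$ is not by itself enough to pin down the relevant points. Everything else — pairwise parallelism of $M_1,M_2,M_3$, reflection of collinearity by embeddings, and uniqueness of the intersection point — is routine once that reduction is in place. Choosing the \emph{first} nontrivial step is exactly what keeps $A\to A_k$ trivial, and hence $A\to A''$ a trivial planarisation, matching the conclusion of the lemma.
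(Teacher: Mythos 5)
Your proof is correct and follows essentially the same route as the paper's: isolate the first nontrivial elementary step $A_k\to A_{k+1}$, use Proposition~\ref{proposition:trivial_step} to build a trivial planarisation meeting only two of the three concurrent lines, and invoke Remark~\ref{remark:lifting_amalgams} to upgrade any amalgam over $A$ to one over $A_k$, forcing the identification $\alpha(b)=\beta(c)$ and the contradiction with $c\notin M_3$. You merely make explicit two points the paper leaves implicit — the pairwise parallelism of $M_1,M_2,M_3$ in $A_k$ (needed to apply Proposition~\ref{proposition:trivial_step}) and the detailed (L1)/collinearity-reflection argument behind ``identifying $a'$ and $a''$ is impossible'' — which is fine.
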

\begin{proof}
	We may assume without loss of generality that $A\subseteq A'$. Suppose first that $A'$ is elementary, i.e.\ $A'\setminus A=\{a'\}$. Since $A'$ is nontrivial, $a'$ is the intersection of at least three lines in $A$, say $L_1,L_2,L_3$. Let $A''\cup \{a''\}$ be a trivial elementary planarisation in which $L_1$ and $L_2$ intersect (which we have by Proposition~\ref{proposition:trivial_step}). Then by construction, $a''$ does not lie on $L_3$. On the other hand, since $L_1,L_2$ cannot intersect at more than one point (not even in any extension), any amalgam of $A'$ and $A''$ over $A$ would need to identify $a'$ and $a''$, which is clearly impossible.

	Now, suppose that $A'$ is an arbitrary nontrivial planarisation. Then by definition we can find a sequence $A_0'=A,A_1',\ldots,A_n'=A'$ such that $A_{i+1}'\supseteq A_i'$ is an elementary planarisation for each $i<n$. Since $A'$ is nontrivial, there is a minimal $i$ such that $A_i'\subseteq A_{i+1}'$ is nontrivial. As in the preceding paragraph, we can find a trivial planarisation $A_i'\to A''$ which cannot be amalgamated with $A_{i+1}'$, and hence with $A'$, over $A_i'$. By Remark~\ref{remark:lifting_amalgams}, we conclude that $A'$ and $A''$ do not amalgamate over $A$, either. Since $A'\subseteq A''$ is trivial by construction, we are done.
\end{proof}

\begin{lm}
	\label{lemma:non_plane_has_incompatible_planarisations}
	If $A$ is a finite linear structure which is not a projective plane, then it admits two finite planarisations which cannot be amalgamated over $A$.
\end{lm}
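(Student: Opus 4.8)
The plan is to reduce the statement to the existence of a single finite \emph{nontrivial} planarisation of $A$, and then to quote Lemma~\ref{lemma:nontrivial_incomaptible}. First I would dispose of the closed case. If $A$ is closed but not a projective plane, then it is degenerate: indeed every non-closed $A$ is automatically non-degenerate, since for any parallel pair $L_1, L_2$ and any $a,b\in L_1$, $c,d\in L_2$ the four points $a,b,c,d$ are independent (no third one lies on a line through two of the others, as the relevant lines would otherwise meet $L_1$ or $L_2$ twice). Thus ``degenerate'' forces ``closed'', and a closed non-plane is a line, a line with one extra point, or a space on at most three points; these admit no proper planarisation and lie outside the substantive content of the statement. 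I therefore concentrate on $A$ not closed, fix a parallel pair $L_1, L_2$, and fix the quadrangle $\{a,b,c,d\}$ with $a,b\in L_1$, $c,d\in L_2$ supplied by the remark above.

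The core claim is then: a non-closed finite $A$ admits a finite nontrivial planarisation. Granting it, take such a planarisation $A\to A'$; Lemma~\ref{lemma:nontrivial_incomaptible} returns a finite trivial planarisation $A\to A''$ that cannot be amalgamated with $A'$ over $A$, and $A',A''$ are the two required planarisations. To prove the claim I would produce three pairwise parallel lines inside some finite trivial planarisation of $A$ and then apply Proposition~\ref{proposition:intersect_three_lines} to make them concurrent at a new point, which is a nontrivial step by construction. The mechanism is the diagonal-point construction for the quadrangle $\{a,b,c,d\}$: using Proposition~\ref{proposition:trivial_step} I adjoin those of the three diagonal points (the would-be intersections of $\pl(a,b),\pl(c,d)$; of $\pl(a,c),\pl(b,d)$; and of $\pl(a,d),\pl(b,c)$) whose defining pairs are parallel in $A$, and then observe, exactly as in the completion of the affine plane of order $2$, that the line joining two diagonal points is parallel to the two quadrangle-sides through the third. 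This exhibits three pairwise parallel lines (for instance $L_1,L_2$ together with the line through the other two diagonal points, or the line through two diagonal points together with $\pl(a,d),\pl(b,c)$, depending on which diagonal pairs are parallel), to which Proposition~\ref{proposition:intersect_three_lines} then applies.

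The hard part is the bookkeeping in this last step. Whether $\pl(a,c),\pl(b,d)$ and $\pl(a,d),\pl(b,c)$ are parallel in $A$ or already intersect there splits the argument into cases, and in each case one must check that the joining line of two diagonal points is genuinely disjoint, \emph{in the current finite trivial planarisation}, from the two sides it should be parallel to — i.e.\ that no already-present point of $A$ happens to lie on it. The underlying dichotomy is the classical one for completing a quadrangle: its diagonal triangle is either non-degenerate (the ``free'' situation, where the joining line is parallel to the opposite two sides and Proposition~\ref{proposition:intersect_three_lines} is needed to create the missing concurrence, i.e.\ to force the three diagonal points collinear) or already degenerate/Fano-like (where three quadrangle lines are concurrent and a nontrivial step is available at once). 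Once the claim is secured, the incompatibility of $A'$ and $A''$ is precisely Lemma~\ref{lemma:nontrivial_incomaptible}, whose force ultimately comes from the epimorphism property (Lemma~\ref{lemma:planar_epimorphism}): any amalgam must identify the corresponding intersection points, turning the ``on a third line versus off it'' discrepancy into a genuine contradiction.
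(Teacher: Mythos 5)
Your overall architecture is the same as the paper's: reduce the lemma to producing a single finite \emph{nontrivial} planarisation and invoke Lemma~\ref{lemma:nontrivial_incomaptible} (this is literally the paper's opening move), and your preliminary observation that a non-closed space contains an independent quadrangle $a,b\in L_1$, $c,d\in L_2$ is correct (your explicit dismissal of the closed degenerate case is, if anything, more scrupulous than the paper, which passes over it in silence). The gap is in the core construction. Your key claim --- that the line joining two diagonal points is parallel to the two sides through the third --- is only guaranteed when that joining line is trivial and new, i.e.\ when at least one of its endpoints is a freshly adjoined point from Proposition~\ref{proposition:trivial_step}; in that situation your endpoint checks are exactly right. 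But when both cross-diagonal points $e_2=\pl(a,c)\wedge\pl(b,d)$ and $e_3=\pl(a,d)\wedge\pl(b,c)$ already lie in $A$, the line $\pl(e_2,e_3)$ is an old, possibly long, line of $A$, and nothing forces it off $L_1$ and $L_2$: this is not a deferred verification but a statement that can be outright false. Concretely, take $A=\proplane{\field{3}}$ with one point $z$ deleted. Its parallel pairs are exactly the pairs among the four punctured $3$-point lines through $z$; for any quadrangle on such a pair, all four cross-sides are full $4$-point lines of $A$, so $e_2,e_3\in A$, and since the diagonal points of a quadrangle in $\proplane{\field{3}}$ are never collinear (Fano's axiom fails in odd characteristic), $\pl(e_2,e_3)$ does not pass through $z$ and therefore meets both $L_1$ and $L_2$ at points of $A$. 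Thus neither of your candidate triples is pairwise parallel, for any choice of parallel pair and quadrangle, and your dichotomy (``non-degenerate diagonal triangle $\Rightarrow$ joining line parallel to the opposite sides'') fails. There is also an internal slip: since $L_1\parallel L_2$ always, your recipe adjoins $e_1$, after which $L_1$ and $L_2$ intersect and the triple $L_1,L_2,\pl(e_2,e_3)$ cannot be fed to Proposition~\ref{proposition:intersect_three_lines} in any case.

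What is missing is the paper's case split on \emph{triviality} of the parallel lines. If some trivial parallel pair $L=\{a_1,a_2\}$, $L'=\{a_1',a_2'\}$ exists, your diagonal argument goes through exactly because the quadrangle exhausts $L\cup L'$: the join $\pl(b_1,b_2)$ of the two cross-intersection points (created by Proposition~\ref{proposition:trivial_step} if need be) could only meet $L$ or $L'$ through one of the four quadrangle points, which would make it meet a side twice; then Proposition~\ref{proposition:intersect_three_lines} yields the nontrivial step. This is the paper's Case 1. When every parallel pair involves a nontrivial line --- as in the example above --- the paper does \emph{not} push the diagonal construction through directly; instead it applies Proposition~\ref{proposition:trivial_step} twice more to manufacture a pair of trivial parallel lines ($\pl(b_4,a_2)$ and $\pl(b_1,b_3)$ in its notation) and reduces to Case 1. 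Supplying this reduction, or some other device for creating a trivial parallel pair, is what your proposal needs in order to be complete.
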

\begin{proof}
	By Lemma~\ref{lemma:nontrivial_incomaptible}, it is enough to show that $A$ admits a nontrivial planarisation. Since planarisations are closed under composition, we may always replace $A$ by a suitable finite planarisation as needed.

	\begin{figure}
		\centering

		\begin{minipage}{0.45\textwidth}
			\centering
			\begin{tikzpicture}
				\filldraw[black] (0, 0) circle (2pt) node[anchor=east]{$a_1$};
				\filldraw[black] (2, 0) circle (2pt) node[anchor=south west]{$a_2$};
				\filldraw[black] (0, -1) circle (2pt) node[anchor=east]{$a_1'$};
				\filldraw[black] (2, -1) circle (2pt) node[anchor=north west]{$a_2'$};
				\filldraw[black] (1, 1) circle (2pt) node[anchor=east]{$b_1$};
				\filldraw[black] (1, -2) circle (2pt) node[anchor=south]{$b_2$};
				\draw plot [smooth] coordinates { (1, 1) (0, 0) (0, -1)};
				\draw plot [smooth] coordinates { (1, 1) (2, 0) (2, -1)};
				\draw plot [smooth, tension=1] coordinates { (0, 0) (2, -1) (1, -2)};
				\draw plot [smooth, tension=1] coordinates { (2, 0) (0, -1) (1, -2)};

				\draw[black] (3, -0.5) circle (2pt) node[anchor=west]{$b_3$};
				\draw plot [smooth] coordinates {(0, 0) (2, 0) (3, -0.5) };
				\draw plot [smooth] coordinates {(0, -1) (2, -1) (3, -0.5) };
				\draw[dashed] plot [smooth, tension=1] coordinates {(1, 1) (3, -0.5) (1, -2) };

			\end{tikzpicture}
		\end{minipage}\hfill
		\begin{minipage}{0.45\textwidth}
			\centering

			\begin{tikzpicture}
				\filldraw[black] (-1, 0) circle (2pt) node[anchor=east]{$a_1$};
				\filldraw[black] (0,0) circle (2pt) node[anchor=north east]{$a_2$};
				\filldraw[black] (1,0) circle (2pt) node[anchor=south west]{$a_3$};
				\filldraw[black] (0, -1) circle (2pt) node[anchor=north east]{$a_1'$};
				\filldraw[black] (1, -1) circle (2pt) node[anchor=south east]{$a_2'$};
				\filldraw[black] (0.5, 1) circle (2pt) node[anchor=west]{$b_2$};
				\filldraw[black] (1, -2) circle (2pt) node[anchor=east]{$b_1$};
				\draw plot[smooth] coordinates {(-2, 1) (-1, 0) (0, -1) (1, -2)};
				\draw plot[smooth] coordinates{(1, -2) (1, -1) (1, 0) (0.5, 1)};
				\draw plot[smooth] coordinates{(0, -1) (0, 0) (0.5, 1)};
				\draw[black] (2, 0) circle (2pt) node[anchor=west]{$b_3$};
				\draw plot[smooth] coordinates{(-1, 0) (0, 0) (1, 0) (2, 0)};
				\draw plot[smooth] coordinates{(0, -1) (1, -1) (2, 0)};
				\draw[black] (-2, 1) circle (2pt) node[anchor=east]{$b_4$};
				\draw plot[smooth] coordinates{(-2, 1) (0.5, 1) (2, 0)};
				\draw[dashed] (-2, 1)--(0, 0);
				\draw[dashed] (2, 0)--(1, -2);

			\end{tikzpicture}
		\end{minipage}

		\caption{Diagrams for the proof of Lemma~\ref{lemma:non_plane_has_incompatible_planarisations}. The unfilled dots represent points not present in the original space. On the le    ft, the dashed line represents a new nontrivial line in a planarisation of $A$ (for Case 1). On the right, the dashed lines are the new trivial lines which are parallel (for Case 2).}
		\label{figure:cases}
	\end{figure}
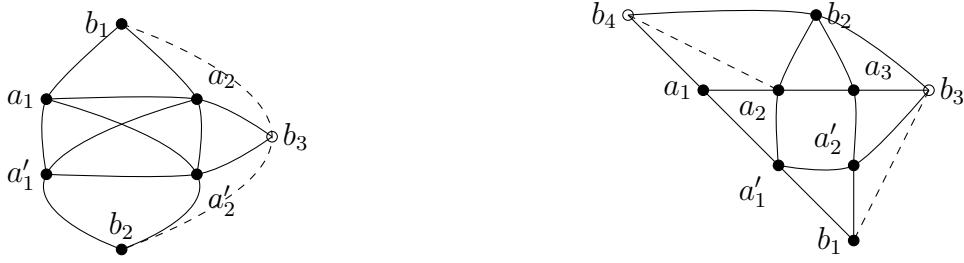
	\textbf{Case $1$:} There is a pair of trivial parallel lines in $A$ (see the diagram on the left in Figure~\ref{figure:cases}).
	Let $L=\pline(a_1, a_2)$, $L'=\pline(a_1', a_2')$ be a pair of trivial parallel lines. We may assume without loss of generality that the pairs $\pline(a_1, a_1')$, $\pline(a_2, a_2')$ and $\pline(a_1, a_2')$, $\pline(a_1', a_2)$ are non-parallel (by applying Proposition~\ref{proposition:trivial_step} if necessary) and intersect at $b_1, b_2$ respectively. Then $\pline(b_1, b_2)$ cannot intersect $L$ nor $L'$: if it did, it would have to contain one of $a_1, a_2, a_1', a_2'$, in which case it would intersect either  $\pline(a_1, a_1')$ or $\pline(a_2, a_2')$ in at least two points, a contradiction.

	Then by Proposition~\ref{proposition:intersect_three_lines}, we can find a nontrivial planarisation in which $L,L'$ and $\pline(b_1,b_2)$ are concurrent, so we are done.

	\textbf{Case $2$:} There is at least one pair of parallel lines, at least one of which is nontrivial (see the diagram on the right in Figure~\ref{figure:cases}). Let $L,L'$ be a parallel pair such that $L$ is nontrivial. Now, let $a_1, a_2, a_3$ be distinct points on $L$, while $a_1', a_2'$ are distinct points on $L'$. We may assume without loss of generality that $\pline(a_1, a_1')$ and $\pline(a_3, a_2')$ intersect at $b_1$, while $\pline(a_2, a_1')$ and $\pline(a_3, a_2')$ intersect at $b_2$.

	Extend $A$ via Proposition~\ref{proposition:trivial_step} to intersect $L,L'$ at some $b_3$. Then the lines $\pline(b_3, b_2)$ and $\pline(a_1, a_1')$ are parallel (because $\pline(b_3, b_2)$ is trivial by construction and neither $b_2$ nor $b_3$ can lie on $\pline(a_1, a_1')$, as then the latter would intersect either $\pline(a_2, a_1')$ or $L$ at two points).

	Thus, we can apply Proposition~\ref{proposition:trivial_step} again to obtain a new point $b_4$ at which $\pline(b_3, b_2)$ and $\pline(a_1, a_1')$ intersect. Then the lines $\pline(b_4, a_2)$ and $\pline(b_1, b_3)$ are parallel and trivial, so we reduce to Case 1.
\end{proof}

The following is the main technical lemma of this section.
\begin{lm}
	\label{lemma:aplanar_amalgamation}
	If $f_1\colon A\to B_1$ and $f_2\colon A\to B_2$ are embeddings of linear spaces, at least one of which is aplanar, then $f_1$ and $f_2$ can be amalgamated.
\end{lm}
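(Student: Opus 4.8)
The plan is to construct an explicit amalgam as a mild modification of the free amalgam, using the aplanarity of one map only to rule out a single dangerous configuration. Assume without loss of generality that $f_1, f_2$ are inclusions, that $B_1 \cap B_2 = A$, and that $f_1$ is the aplanar one. I take the point set of the amalgam $C$ to be $B_1 \cup B_2$ and specify its lines as follows. For every line $M \in \El^A$ (so $\abs M \goe 2$), let $\h M_1 \sups M$ and $\h M_2 \sups M$ be the lines of $B_1$ and $B_2$ respectively that contain $M$; I declare the \emph{merged line} $\h M_1 \cup \h M_2$ to be a line of $C$. Every line of $B_1$ meeting $A$ in at most one point, and likewise every such line of $B_2$, is kept as a line of $C$. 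Finally, for each pair $x \in B_1 \setminus A$, $y \in B_2 \setminus A$ not already lying on a common merged line, I add the trivial line $\dn x y$. It is immediate that every two points lie on a common line, so axiom (L0) holds; the content is axiom (L1).

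Verifying (L1) amounts to checking that any two distinct lines of $C$ meet in at most one point, which I would do by running through the possible types. The only interactions that can escape $A$ involve merged lines, and each reduces to the observation that a merged line $\h M_1\cup \h M_2$ meets $B_1$ exactly in $\h M_1$ and meets $B_2$ exactly in $\h M_2$ (since $\h M_2 \cap B_1 = \h M_2\cap A = M \subs \h M_1$, and symmetrically). Consequently the intersection of two merged lines $\h M_1\cup \h M_2$ and $\h N_1\cup \h N_2$ with $M \ne N$ (hence $\h M_1 \ne \h N_1$ and $\h M_2 \ne \h N_2$) is contained in $(\h M_1\cap \h N_1)\cup(\h M_2\cap \h N_2)\cup(M\cap N)$, each summand having at most one point; the cross terms $\h M_1\cap \h N_2$ and $\h M_2\cap\h N_1$ lie in $A$ and equal $M\cap N$. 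When $M\cap N \nnempty$, say $M \cap N = \sn r$, the point $r$ already lies on $\h M_1\cap \h N_1$ and on $\h M_2\cap\h N_2$, so it absorbs those candidate intersections and nothing lies outside $A$. The remaining, genuinely dangerous, case is $M\cap N = \emptyset$, i.e.\ $M$ and $N$ parallel in $A$.

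This is exactly where aplanarity enters, and it is the main obstacle. If $M, N$ are parallel in $A$ but their extensions $\h M_1, \h N_1$ met at a point $p_1 \in B_1\setminus A$, then $M$ and $N$ would be two lines of $A$ intersecting in $B_1$ yet not in $A$, contradicting aplanarity of $f_1$. Hence $\h M_1\cap \h N_1 = \emptyset$, and the intersection of the two merged lines reduces to $\h M_2\cap \h N_2$, at most a single point. (Had $f_2$ been the aplanar map, the symmetric argument would kill $\h M_2\cap \h N_2$ instead.) The same observation shows that no pair $x \in B_1\setminus A$, $y\in B_2\setminus A$ can be placed on two distinct merged lines (that would again force a forbidden new intersection on one side), so the trivial connecting lines are unambiguously defined, and the routine checks of (L1) against pure lines and trivial connectors go through using only that a pure line of $B_i$ or a trivial connector meets the other side within $A$.

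It then remains to observe that the inclusions $\map{g_1}{B_1}{C}$ and $\map{g_2}{B_2}{C}$ are embeddings: every line of $C$ traces on $B_1$ a subset of a line of $B_1$ (a merged line traces to $\h M_1$, while kept $B_2$-lines and trivial connectors trace to at most one point), so collinearity in $C$ restricted to $B_1$ agrees with collinearity in $B_1$, and symmetrically for $B_2$. Since $g_1$ and $g_2$ both restrict to the identity on $A$, they agree there, so $C$ together with $g_1, g_2$ is the desired amalgam of $f_1$ and $f_2$.
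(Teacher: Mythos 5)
Your construction is the same as the paper's: your merged lines $\h M_1 \cup \h M_2$ are exactly the paper's definition of cross-collinearity via a common line of $A$ (with your trivial connectors implicit in its relational presentation), and aplanarity enters at the same point --- the paper's uniqueness claim for the witnessing line corresponds precisely to your parallel-case analysis killing $\h M_1 \cap \h N_1$. The only difference is presentational: the paper verifies transitivity of the ternary collinearity relation, while you verify axiom (L1) for the family of lines; both checks are equivalent and your proof is correct.
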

\begin{proof}
	Consider $C=B_1\sqcup_AB_2$ (the union of $B_1, B_2$ so that $A = B_1 \cap B_2$) with the structure defined as follows:
	\begin{itemize}
		\item
		$B_1$ and $B_2$ are naturally embedded,
		\item
		$c_1,c_2,c_3\in C$, not all in the same $B_i$, are collinear when they are distinct and there is a line $L$ in $A$ such that each $b_i$ is in the (unique) extension of $L$ to a line in the corresponding $B_i$.
	\end{itemize}
	As soon as we check that this describes a linear space structure on $C$, it is clear that $C$ with this structure is an amalgam of $B_1$ and $B_2$ over $A$.

	Since $B_1, B_2$ each is a linear space, it suffices to check we have transitivity for the collinearity relation (symmetry and irreflexivity are obvious). Suppose $c_1,c_2,c_3,c_4\in C$ are such that $c_1,c_2,c_3$ and $c_1,c_2,c_4$ are collinear. We need to check that $c_1,c_3,c_4$ are collinear.
	\begin{enumerate}[label={\textbf{Case} \arabic*:}, wide]
		\item
		$c_1, c_2\in B_1$ or $c_1,c_2\in B_2$. We may assume without loss of generality that the former holds. If $\pline^{B_1}(c_1,c_2)\cap A$ has fewer than two points, then it follows from the definition of collinearity in $C$ that $c_3,c_4\in B_1$, so the conclusion is clear. Otherwise, $L=\pline^{B_1}(c_1,c_2)\cap A$ is a line in $A$, and it is not hard to see $c_3$ and $c_4$ necessarily lie on the extension of $L$ to the corresponding $B_i$, which yields the conclusion: e.g.\ if $c_3\in B_2$, then by definition of $\coll(c_1,c_2,c_3)$, there is a line $L'$ in $A$ on whose extensions $c_1,c_2,c_3$ all lie, but then $L'$ has to equal $\pline^{B_1}(c_1,c_2)\cap A=L$.
		\item
		Otherwise, we may assume without loss of generality that $c_1\in B_1\setminus A$, $c_2\in B_2\setminus A$.

		\begin{clm*}
			In this case, there is a unique line $L$ in $A$ such that $c_1$ and $c_2$ both lie on (the extensions in the appropriate spaces of) $L$.
		\end{clm*}

		\begin{clmproof}
			By the definition of collinearity, there is at least one such line. If there were two such lines, then both $c_1$ and $c_2$ would be the intersections of those two lines in $B_1$ and $B_2$, respectively, witnessing that neither $B_1$ nor $B_2$ is an aplanar extension of $A$, counter to the hypothesis of the lemma.
		\end{clmproof}

		Now, let $L$ be the line we have by the claim. Similarly to Case 1, both $c_3$ and $c_4$ necessarily have to lie on (the appropriate corresponding extensions of) $L$: for example, if $c_3\in B_1$, then (because $c_2\notin B_1$), $\coll(c_1, c_2, c_3)$ implies that there is a line $L'$ in $A$ on whose extensions $c_1, c_2$ and $c_3$ all lie, and by Claim, $L'=L$, and likewise when $c_3\in B_2$ and for $c_4$. The conclusion easily follows.\qedhere
	\end{enumerate}
\end{proof}

\begin{prop}
	\label{proposition:projective_iff_amalgamation_base}
	A finite linear space is an amalgamation base (in the class of finite linear spaces) if and only if it is a projective plane.
\end{prop}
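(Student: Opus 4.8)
The plan is to prove the two implications separately, drawing on the two principal lemmas already established: Lemma~\ref{lemma:non_plane_has_incompatible_planarisations} for the necessity of being a plane, and the aplanar amalgamation Lemma~\ref{lemma:aplanar_amalgamation} for its sufficiency. Neither direction requires fresh combinatorial input, since all the real labor has been front-loaded into those lemmas.

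For the direction that a non-plane cannot be an amalgamation base, I would argue by contraposition. Suppose $A$ is a finite linear space that is not a projective plane. Lemma~\ref{lemma:non_plane_has_incompatible_planarisations} hands us two finite planarisations $A\to B_1$ and $A\to B_2$ that cannot be amalgamated over $A$. Since planarisations are in particular embeddings and $B_1, B_2$ are finite linear spaces, this is precisely a witness that $A$ fails the amalgamation property within the class of finite linear spaces; hence $A$ is not an amalgamation base. This half invokes the lemma and nothing more.

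For the converse, suppose $A$ is a finite projective plane and let $f_1\colon A\to B_1$, $f_2\colon A\to B_2$ be arbitrary embeddings into finite linear spaces. The key observation is that every embedding of a projective plane is aplanar: because $A$ is a closed plane, any two of its lines already intersect in $A$, so the defining condition ``any two lines in $A$ intersecting in $B_i$ already intersect in $A$'' holds vacuously. Thus both $f_1$ and $f_2$ are aplanar, and Lemma~\ref{lemma:aplanar_amalgamation} produces an amalgam $C=B_1\sqcup_A B_2$. As $|C|=|B_1|+|B_2|-|A|$ is finite, the amalgam lives in the class of finite linear spaces, so $A$ is an amalgamation base.

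I expect no genuine obstacle inside this proof, as the substance lies in the two lemmas. The only points demanding any care are the small verifications that (i) a projective plane admits only aplanar embeddings, where it is the closed-plane axiom rather than non-degeneracy that is doing the work, and (ii) the amalgam furnished by Lemma~\ref{lemma:aplanar_amalgamation} is finite, so that we remain inside the intended class. Both are immediate, and the proposition follows at once by combining the two lemmas.
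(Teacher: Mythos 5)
Your proposal is correct and follows essentially the same route as the paper: the forward direction is exactly Lemma~\ref{lemma:non_plane_has_incompatible_planarisations}, and the converse is the paper's observation that every embedding of a projective plane is aplanar (with the minor caveat that aplanarity holds not vacuously but because any two lines of $A$ intersect in $A$, and by axiom (L1) their intersection point in $B_i$ must coincide with that point), followed by Lemma~\ref{lemma:aplanar_amalgamation}. Your extra checks of finiteness of the amalgam are sound and merely make explicit what the paper leaves implicit.
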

\begin{proof}
	Let $A$ be a linear space. If it is not a projective plane, then by Lemma~\ref{lemma:non_plane_has_incompatible_planarisations}, it is not an amalgamation base.

	Conversely, if $A$ is a projective plane, then any embedding of $A$ is aplanar, so by Lemma~\ref{lemma:aplanar_amalgamation} we conclude that it is an amalgamation base.
\end{proof}

\begin{prop}
	\label{proposition:amalgamable_implies_complete_planarisation}
	If $A\to B$ is an amalgamable embedding of linear spaces, then the planar closure of $A$ in $B$ is a projective plane. In particular, $A$ admits a finite amalgamable extension if and only if $A$ admits a finite planarisation.
\end{prop}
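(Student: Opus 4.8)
The plan is to prove the displayed main assertion first and then read off the ``in particular'' clause. Write $\bar A$ for the planar closure of $A$ in $B$, and recall two facts I will lean on: the inclusion $A\to\bar A$ is itself a planarisation, and the factor map $\bar A\to B$ is \emph{aplanar}. I read the hypothesis that $A\to B$ is amalgamable as saying that the codomain $B$ is an amalgamation base; this is exactly the leverage the argument needs. The whole proof will be by contradiction: assuming $\bar A$ is not a projective plane, I will manufacture an amalgam of two planarisations of $\bar A$ that were chosen to be non-amalgamable.

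So suppose $\bar A$ is not a projective plane. By Lemma~\ref{lemma:non_plane_has_incompatible_planarisations} applied to $\bar A$ there are planarisations $\bar A\to P_1$ and $\bar A\to P_2$ that cannot be amalgamated over $\bar A$. Now I exploit aplanarity: since the embedding $\bar A\to B$ is aplanar, Lemma~\ref{lemma:aplanar_amalgamation} produces, for each $i$, an amalgam $C_i$ of $B$ and $P_i$ over $\bar A$, with embeddings $B\to C_i$ and $P_i\to C_i$ that agree on $\bar A$. Crucially, each $C_i$ is now an \emph{extension of} $B$, so, $B$ being an amalgamation base, the two extensions $B\to C_1$, $B\to C_2$ can be amalgamated over $B$; this gives $W$ with embeddings $C_1\to W$, $C_2\to W$ agreeing on $B$.

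I then claim $W$ is an amalgam of $P_1$ and $P_2$ over $\bar A$, the desired contradiction. This is a short diagram chase: the composites $P_i\to C_i\to W$ are embeddings, and on $\bar A$ both restrict to the single map $\bar A\to B\to W$ (using first that $C_i$ is an amalgam over $\bar A$, so $\bar A\to P_i\to C_i$ equals $\bar A\to B\to C_i$, and then that $C_1\to W$, $C_2\to W$ agree on $B\supseteq\bar A$). Hence the two composites agree on $\bar A$, so $W$ genuinely amalgamates $P_1,P_2$ over $\bar A$, contradicting the choice of $P_1,P_2$; Remark~\ref{remark:lifting_amalgams} packages exactly this ``lifting along a planarisation'' bookkeeping and may be invoked to shorten it. This proves $\bar A$ is a projective plane. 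For the ``in particular'': if $A$ has a finite amalgamable extension $B$, then $\bar A\subseteq B$ is finite, and by the main assertion $\bar A$ is a finite projective plane, so $A\to\bar A$ is a finite complete planarisation; conversely, a finite complete planarisation of $A$ is a finite projective plane containing $A$, which by Proposition~\ref{proposition:projective_iff_amalgamation_base} is an amalgamation base, hence a finite amalgamable extension of $A$.

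The load-bearing step is the passage ``amalgam over $B$ $\Rightarrow$ amalgam over $\bar A$'', i.e.\ checking that the two composites agree on $\bar A$: this is where the aplanarity of $\bar A\to B$ (needed even to build the $C_i$) and the amalgamation-base property of $B$ (needed to glue them) combine, and it is the only non-formal point. The secondary issue to watch is finiteness: Lemma~\ref{lemma:non_plane_has_incompatible_planarisations} is stated for finite spaces, which is automatic once $B$ is finite, whereas for infinite $B$ one must observe that the incompatibility it produces is local (a single point forced at once on and off a line) and so survives in an infinite ambient space; one should also keep the standing non-degeneracy conventions in mind when reading ``projective plane'' in the degenerate cases.
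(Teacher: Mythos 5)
There is one genuine gap, and it sits exactly at the step you call load-bearing: you read the hypothesis ``$A\to B$ is amalgamable'' as saying that the codomain $B$ is an amalgamation base, but in this paper the phrase has the strictly weaker, WAP-style meaning: any two extensions of $B$ admit a common extension whose two embeddings agree on (the image of) $A$ only --- not on all of $B$. This is visible in the paper's own proof (``Since $A\subseteq B$ is amalgamable, $B_1$ and $B_2$ have an amalgam $C$ over $A$'') and is forced by the role of the proposition in Theorem~\ref{theorem:wap_cap_planes}: if ``amalgamable embedding'' meant ``embedding into an amalgamation base'', the second and third bullets there would be tautologically identical and the proposition would contribute nothing to the equivalence of WAP with Conjecture~(\pfp). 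Under the correct reading, your gluing step fails as written: you may amalgamate $C_1$ and $C_2$ only over $A$, so the composites $P_i\to C_i\to W$ are a priori known to agree only on $A$, not on $B\supseteq\bar A$ --- and then your diagram chase, which rests on agreement on $B$, does not go through. The repair is short and is exactly the paper's move: since $A\to\bar A$ is a planarisation, hence an epimorphism by Lemma~\ref{lemma:planar_epimorphism}, agreement on $A$ upgrades to agreement on $\bar A$; this is precisely Remark~\ref{remark:lifting_amalgams}, which you mention only as an optional shortcut, whereas in fact it is indispensable --- your ``only non-formal point'' is non-formal for exactly this reason.

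Apart from this, your architecture coincides with the paper's, merely run contrapositively: the paper takes \emph{arbitrary} extensions $A_1,A_2$ of $\bar A$, amalgamates each with $B$ over $\bar A$ via aplanarity and Lemma~\ref{lemma:aplanar_amalgamation}, glues the results over $A$ using amalgamability, descends to $\bar A$ via Remark~\ref{remark:lifting_amalgams}, and concludes that $\bar A$ is an amalgamation base, hence a projective plane by Proposition~\ref{proposition:projective_iff_amalgamation_base}; you instead specialize $A_1,A_2$ to the incompatible planarisations of Lemma~\ref{lemma:non_plane_has_incompatible_planarisations} and derive a contradiction, which amounts to unwinding Proposition~\ref{proposition:projective_iff_amalgamation_base} --- no real loss or gain either way. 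Your side remarks are sound: the incompatibility produced by Lemma~\ref{lemma:non_plane_has_incompatible_planarisations} is absolute (it forces a point simultaneously on and off a line), so it persists in infinite ambient spaces, and your handling of the ``in particular'' clause, with the converse via Proposition~\ref{proposition:projective_iff_amalgamation_base}, matches the intended reading of ``finite planarisation'' as a finite complete one. With the single correction above, the proof is right.
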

\begin{proof}
	Suppose that $A\to B$ is an amalgamable embedding. We may assume for simplicity that $A\subseteq B$. Let $\bar A$ be the planar closure of $A$ in $B$. We will show that $\bar A$ is an amalgamation base, whence by Proposition~\ref{proposition:projective_iff_amalgamation_base} it will follow that it is a projective plane.

	Let $A_1, A_2$ be arbitrary extensions of $\bar A$. Since $\bar A$ is the planar closure of $A$ in $B$, it follows that $\bar A\subseteq B$ is aplanar, so by Lemma~\ref{lemma:aplanar_amalgamation}, we can amalgamate $B$ and $A_i$ over $\bar A$ for $i=1,2$ to obtain $B_1$ and $B_2$. Since $A\subseteq B$ is amalgamable, $B_1$ and $B_2$ have an amalgam $C$ over $A$, and by Remark~\ref{remark:lifting_amalgams}, $C$ is also an amalgam of $B_1$ and $B_2$ over $\bar A$, and hence also an amalgam of $A_1$ and $A_2$ over $\bar A$. Since $A_1,A_2$ were arbitrary, we are done.

	\begin{center}
		\begin{tikzcd}
			&&C&\\
			B_1
			\ar[rru]&
			B
			\ar[l]&&B\ar[r]&
			B_2
			\ar[llu]
			\\
			&A_1
			\ar[ul]
			&\bar A
			\ar[ul]\ar[ur]\ar[r]\ar[l]
			&A_2
			\ar[ru]
			\\
			\,&&A
			\ar[u]&
		\end{tikzcd}
	\end{center}
\end{proof}

\begin{tw}
	\label{theorem:wap_cap_planes}
	Let $A$ be a finite linear space. Then the following are equivalent:
	\begin{itemize}[itemsep=0pt]
		\item
		$A$ embeds into a finite projective plane,
		\item
		$A$ embeds into an amalgamation base (in the class of finite linear spaces),
		\item
		$A$ admits an amalgamable embedding (in the same class).
	\end{itemize}
	Consequently, the following are equivalent:
	\begin{itemize}[itemsep=0pt]
		\item
		Conjecture $\dagger$, i.e. every finite linear space can be embedded into a finite projective plane,
		\item
		the class of finite linear spaces has the cofinal amalgamation property,
		\item
		the class of finite linear spaces has the weak amalgamation property.
	\end{itemize}
\end{tw}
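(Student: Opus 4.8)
The first chain concerns a single fixed finite linear space $A$; the second is a global statement about the whole class. I would handle the first block of equivalences, then deduce the second.

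For the first three-way equivalence, the implications largely follow from the machinery already built. The plan is to prove the cycle of implications. First I would observe that "$A$ embeds into a finite projective plane" implies "$A$ embeds into an amalgamation base" immediately, since by Proposition~\ref{proposition:projective_iff_amalgamation_base} every finite projective plane \emph{is} an amalgamation base. Next, "$A$ embeds into an amalgamation base" trivially implies "$A$ admits an amalgamable embedding" (an embedding into an amalgamation base is amalgamable, because any two extensions of the amalgamation base amalgamate, and one can compose). The interesting direction is closing the cycle: "$A$ admits an amalgamable embedding $A\to B$" implies "$A$ embeds into a finite projective plane". Here I would invoke Proposition~\ref{proposition:amalgamable_implies_complete_planarisation}: the planar closure $\bar A$ of $A$ in $B$ is then a projective plane, and $A$ embeds into $\bar A$. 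The only subtlety is \emph{finiteness} of that projective plane; since $A$ is finite and $B$ may be taken finite (the embedding into an amalgamation base, or the amalgamable extension, can be arranged to be finite by Proposition~\ref{proposition:amalgamable_implies_complete_planarisation}, whose "In particular" clause ties finite amalgamable extensions to finite planarisations), the planar closure $\bar A\subseteq B$ is finite, hence a \emph{finite} projective plane.

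For the second three-way equivalence, I would translate the quantified versions of the class-level amalgamation properties into the per-object statement just proved. The cofinal amalgamation property says: every finite linear space $A$ admits an embedding into an amalgamation base; by the first equivalence this is exactly Conjecture~$\dagger$. For the weak amalgamation property, recall its definition: every $Z$ admits an embedding $e\colon Z\to Z'$ such that amalgams over $Z'$ always exist (pushed back through $e$). I would argue that this $Z'$, equipped with the witnessing property, forces $e\colon Z\to Z'$ to be an amalgamable embedding in the sense used above — so weak amalgamation for the class gives an amalgamable extension of every $Z$, which by the first block yields Conjecture~$\dagger$. Conversely, cofinal amalgamation trivially implies weak amalgamation (an embedding into an amalgamation base witnesses the weak amalgamation condition, since amalgams over an amalgamation base always exist), so the three conditions collapse into one equivalence class.

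\textbf{The main obstacle} I anticipate is the careful bookkeeping of finiteness and of the precise difference between "weak amalgamation" and "an amalgamable embedding." The weak amalgamation property as defined quantifies over \emph{all} extensions $X,Y$ of $Z'$ and asks that they amalgamate only after composing with $e$; I must verify that this is genuinely equivalent to $e$ being amalgamable in the sense that any two extensions of $Z'$ amalgamate over $Z$ — and then connect that to Proposition~\ref{proposition:amalgamable_implies_complete_planarisation}, which is phrased for amalgamable embeddings $A\to B$. The reconciliation is essentially definitional, but one must check that the extra map $e$ does not weaken the conclusion, i.e.\ that having amalgams compatibly with $e$ is exactly what Proposition~\ref{proposition:amalgamable_implies_complete_planarisation} consumes. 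Once finiteness is tracked honestly through each implication (using that planar closures of finite spaces inside finite spaces are finite), the argument is a clean cycle of implications with no hidden computation.
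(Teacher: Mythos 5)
Your proposal is correct and takes essentially the same route as the paper's own proof: both reduce the first block to Proposition~\ref{proposition:projective_iff_amalgamation_base} (the finite amalgamation bases are exactly the finite projective planes) together with Proposition~\ref{proposition:amalgamable_implies_complete_planarisation} (the planar closure under an amalgamable embedding is a projective plane), and both obtain the second block by unwinding CAP and WAP into exactly these per-object statements. Your extra bookkeeping about finiteness and about identifying the WAP witness $e\colon Z\to Z'$ with an amalgamable embedding is sound but, as you suspected, purely definitional, and the paper treats it as immediate.
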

\begin{proof}
	The first two bullets are clearly equivalent by Proposition~\ref{proposition:projective_iff_amalgamation_base}. The second and third bullet are equivalent by Proposition~\ref{proposition:amalgamable_implies_complete_planarisation}.

	The ``consequently'' part easily follows, since CAP just means that every finite linear space can be embedded into an amalgamation base, while WAP means that every such space admits an amalgamable embedding.
\end{proof}

\begin{uwgi}
    Note that by Lemma~\ref{lemma:non_plane_has_incompatible_planarisations} (or alternatively, by Lemma~\ref{lm:homogenous_plane_degree_4}), it follows that the class of finite linear spaces does not have the full amalgamation property.
\end{uwgi}

\begin{uwgi}
	Most of the results in this section remain true (after some obvious adjustments) if we drop the hypotheses that the linear spaces be finite, with essentially the same proofs (possibly requiring some transfinite inductive arguments).
\end{uwgi}

On the other hand, countable linear spaces embed into countable projective planes, therefore they have the cofinal amalgamation property. This is explored in the next section.

We finish this section with the following corollary to the results above together with a non-universality result from~\cite{KraKub}.

\begin{tw}\label{THMJuniwa}
	Assume there exists a family $\Yu$ consisting of strictly less than continuum many countable linear spaces, such that every countable linear space embeds into some $U \in \Yu$. Then Conjecture {\rm(\pfp)} is true.
\end{tw}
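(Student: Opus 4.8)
The plan is to recognise this statement as a direct consequence of Theorem~\ref{theorem:wap_cap_planes} together with the universality dichotomy of Krawczyk and the first author~\cite{KraKub}, once the hypothesis is translated into the language of generic limits. Throughout, let $\Ef$ denote the class of finite linear spaces.

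First I would observe that the class of all countable linear spaces coincides with $\ciagi{\Ef}$. Indeed, any $X \in \ciagi{\Ef}$ is a countable union of finite linear spaces, hence a countable linear space; conversely, enumerating the points of a countable linear space $X$ as $x_0, x_1, \dots$ and letting $S_n$ be the induced substructure on $\{x_0, \dots, x_n\}$ exhibits $X = \bigcup_n S_n$ as the union of an increasing chain in $\Ef$ (each finite induced substructure is again a linear space, since axioms (L0), (L1) are inherited). I would also record that $\Ef$ enjoys the joint embedding property: given two finite linear spaces, their disjoint union, with every cross pair declared a trivial line, is again a finite linear space into which both embed. Thus the ambient assumptions under which the results of~\cite{KraKub} are stated are met.

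With this translation in place, the hypothesis says precisely that the universality number of $\ciagi{\Ef}$ --- the least cardinality of a family $\Yu \subs \ciagi{\Ef}$ into whose members every element of $\ciagi{\Ef}$ embeds --- is strictly smaller than $\cont$. By the dichotomy recalled in the Preliminaries (namely, if $\Ef$ fails the weak amalgamation property then this universality number equals $\cont$, see~\cite{KraKub}), its contrapositive form yields that $\Ef$ has the weak amalgamation property. Finally, Theorem~\ref{theorem:wap_cap_planes} asserts that the weak amalgamation property of the class of finite linear spaces is equivalent to Conjecture~(\pfp), so Conjecture~(\pfp) holds, as required.

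The argument is therefore essentially a chain of implications among already established results, and I do not anticipate any genuine difficulty. The only points demanding care are bookkeeping ones: confirming that the family $\Yu$ furnished by the hypothesis really lies inside $\ciagi{\Ef}$ (its members are countable linear spaces, hence in $\ciagi{\Ef}$ by the first step), and that the phrase ``every countable linear space embeds into some $U \in \Yu$'' is literally the statement that the universality number is bounded above by $|\Yu| < \cont$; together with the verification that $\Ef$ satisfies the joint embedding hypothesis needed to apply the dichotomy of~\cite{KraKub}.
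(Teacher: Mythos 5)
Your proposal is correct and follows essentially the same route as the paper: the paper's proof likewise derives the weak amalgamation property of finite linear spaces from the hypothesis via \cite[Corollary 6.3]{KraKub} (the dichotomy you invoke in contrapositive form) and then concludes Conjecture~(\pfp) by Theorem~\ref{theorem:wap_cap_planes}. Your additional bookkeeping --- identifying countable linear spaces with $\ciagi{\Ef}$, checking heredity and the joint embedding property --- merely makes explicit what the paper leaves implicit.
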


\begin{pf}
	By~\cite[Corollary 6.3]{KraKub}, the class of finite linear spaces has the weak amalgamation property. By Theorem~\ref{theorem:wap_cap_planes}, every finite linear space embeds into a finite amalgamation base, which is a projective plane (as long as it contains four independent points).
\end{pf}

\begin{wn}
	If there exists a universal countable linear space then Conjecture {\rm(\pfp)} is true.
\end{wn}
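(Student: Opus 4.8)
The plan is to obtain this as an immediate consequence of Theorem~\ref{THMJuniwa}, which I already have available. A universal countable linear space is, by definition, a single countable linear space $U$ with the property that every countable linear space embeds into it. The one substantive step is to observe that such a $U$ directly furnishes a witnessing family for the hypothesis of Theorem~\ref{THMJuniwa}: simply take $\Yu = \sn U$.

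I would then verify the two conditions imposed on $\Yu$. First, $\Yu$ has cardinality $1$, which is certainly strictly less than $\cont$, so the size requirement ``strictly less than continuum many'' holds trivially. Second, the universality of $U$ says precisely that every countable linear space embeds into $U$, the unique member of $\Yu$. Hence both hypotheses of Theorem~\ref{THMJuniwa} are met, and its conclusion yields that Conjecture (\pfp) is true.

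There is no genuine obstacle in this argument: all of the content has been pushed into Theorem~\ref{THMJuniwa}, and ultimately into the weak amalgamation characterisation of Theorem~\ref{theorem:wap_cap_planes} together with the non-universality result of~\cite{KraKub}. The only point worth stating explicitly is that a single universal object is a degenerate instance of a small universal family, so this corollary is formally weaker than the theorem from which it follows and requires nothing beyond specialising $\Yu$ to a singleton.
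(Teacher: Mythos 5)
Your proposal is correct and is exactly the argument intended by the paper, which states this corollary without proof as an immediate specialisation of Theorem~\ref{THMJuniwa}: taking $\Yu = \sn U$ gives a one-element (hence $<\cont$-sized) universal family, and the theorem applies. Nothing further is needed.
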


Let us note that the validity of Conjecture (\pfp) does not imply any universality results, see~\cite[Example 6.5]{KraKub}.

\section{Universal projective planes}

For the moment it is not clear what the universality number of countable linear spaces is. We prove that if the continuum hypothesis holds, there exists a universal projective plane of cardinality $\aleph_1$.
It is actually the \fra\ limit~\cite{Hodges, Kub40} of the class of all countable projective planes.
The following simple lemma is well known.

\begin{lm}\label{LMghyio}
	Let $\bX = \pair{X}{\El}$ be a linear space.
	Then there exists a projective plane of cardinality $ \loe |X|+\aleph_0$ containing an isomorphic copy of $\bX$.
\end{lm}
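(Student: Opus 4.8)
The plan is to realise the desired plane as a \emph{free (projective) completion} of $\bX$, obtained as the colimit of an $\omega$-chain of trivial planarisations. Before completing, I would secure non-degeneracy, since a projective plane must contain four independent points. If $\bX$ already does, set $\bX'\defi\bX$. Otherwise I would extend $\bX$ to $\bX'=\pair{X'}{\El'}$ by adjoining four fresh points, taking $X'=X\cup\sn{p_0,p_1,p_2,p_3}$, keeping all lines of $\bX$, and adding a two-point (trivial) line through every pair of points of $X'$ not yet jointly on a line. Since every new line is trivial, axiom (L1) is immediate, so $\bX'$ is a linear space, $\bX\subs\bX'$ is an embedding, the set $\sn{p_0,p_1,p_2,p_3}$ is independent (no line contains three of its points), and $|X'|\loe|X|+\aleph_0$.

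Now set $A_0\defi\bX'$ and, recursively, let $A_{n+1}$ be obtained from $A_n$ by adjoining, via Proposition~\ref{proposition:trivial_step} applied to all parallel pairs of $A_n$ at once, one new intersection point for each such pair, every other newly created line being trivial. Put $\bP\defi\bigcup_{\ntr}A_n$ with the induced structure. Each inclusion $A_n\subs A_{n+1}$ is an embedding, hence so is $\bX'\subs\bP$, and therefore so is $\bX\subs\bP$; thus $\bP$ will contain an isomorphic copy of $\bX$.

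It then remains to check the three requisite properties. That $\bP$ is a linear space follows because it is a directed union of linear spaces along embeddings: (L0) holds since any two points appear in some $A_n$, where they lie on a common line, and (L1) holds since two distinct lines meeting in two points $p\ne q$ would already do so in some $A_n$, contradicting (L1) there. For non-degeneracy I would use that an embedding preserves and reflects collinearity, so $\sn{p_0,p_1,p_2,p_3}$ remains independent in $\bP$. The crux is that $\bP$ is a closed plane: given two lines of $\bP$, each is the increasing union of its finite-stage approximations, so both are present (and are lines) in a common $A_n$; if they already meet we are done, and otherwise they form a parallel pair there, so by construction their intersection point is added in $A_{n+1}$ and persists in $\bP$. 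Finally, a cardinality count gives the bound: an infinite linear space of cardinality $\kappa$ has at most $\kappa$ lines (each line is determined by any two of its points by (L1)), hence at most $\kappa$ parallel pairs, so each step preserves $\kappa=|X|+\aleph_0$, and the $\omega$-union multiplies this only by $\aleph_0$; if $X$ is finite, every $A_n$ is finite and $\bP$ is countable.

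The main obstacle is the closed-plane verification, namely making precise that every line of the colimit is the increasing union of its finite-stage approximations and that two lines, once both present at a common stage, are guaranteed an intersection point at the next stage. The remaining points are routine, once one is comfortable that a directed union of linear spaces along embeddings is again a linear space and that embeddings reflect collinearity (so that independence, and hence non-degeneracy, is inherited by the limit).
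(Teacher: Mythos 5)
Your proposal is correct and is essentially the paper's own proof: extend first to a non-degenerate space, then iterate $\omega$ many times the step of adjoining one intersection point per parallel pair, and observe that the union is a closed non-degenerate plane of the right cardinality. The only cosmetic differences are that the paper adds just one or two points (rather than four) to force non-degeneracy, and that you route the iteration step through Proposition~\ref{proposition:trivial_step} applied to all (possibly infinitely many) parallel pairs at once --- harmless, since that proposition's verification works verbatim for arbitrary families, exactly as the paper's remark following it indicates.
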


\begin{pf}
	First, extend $\bX$ so that it becomes non-degenerate (it is enough to add one or two extra points and extend the lines properly). Next, extend it to $\bX_1$ so that every two lines from $\bX$ intersect in $\bX_1$. This can be done by adding an extra point for each pair of parallel lines. In particular, the size of $\bX_1$ is not greater than the maximum of the size of $\bX$ and $\aleph_0$.
	Repeat this procedure infinitely many times, obtaining a plane $\bX_\infty$ which is closed, therefore projective.
\end{pf}

\begin{lm}\label{LMgiiuet}
	Assume $\bX = \pair X \El$ is an uncountable projective plane.
	Given a countable $Y \subs X$, there exists a countable projective subplane $\tilde{Y}$ of $\bX$ containing $Y$.
\end{lm}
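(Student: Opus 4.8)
The plan is a Löwenheim--Skolem style closure argument: I will build $\tilde Y$ as a countable subset of $X$ that contains $Y$, carries four independent points, and is closed under taking the intersection points of the lines it determines. The key preliminary observation is that the structure induced on \emph{any} subset of a linear space is automatically a linear space: the axiom (L1) is inherited (traces of lines meet in at most one point), and (L0) holds as well, since for two points $p,q$ of the subset the trace of the line $\pl(p,q)$ already contains them. Consequently the only genuine work is to arrange that $\tilde Y$ is a \emph{closed} plane and that it is \emph{non-degenerate}, after which it is a projective plane by definition.

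First I would use non-degeneracy of $\bX$ to fix four points of $X$ no three of which are collinear, and put them into the starting set together with $Y$; call the result $Y_0$, still countable. Because collinearity in a subspace is merely the restriction of collinearity in $\bX$, these four points stay independent in every larger subset, which will secure non-degeneracy of $\tilde Y$ for free. Next I would iterate the meet operation. Call a line $L\in\El$ \emph{determined} by a set $Z$ if $|L\cap Z|\ge 2$, and given a countable $Y_n$ set
$$Y_{n+1}=Y_n\cup\setof{L\cap M}{L,M\in\El,\ L\ne M,\ L,M\text{ determined by }Y_n}.$$
Since $\bX$ is a closed plane, each pair of distinct lines $L\ne M$ meets in exactly one point, so this set is well defined; and because a countable point set determines only countably many lines (each determined line equals $\pl(p,q)$ for some pair $p,q$ from $Y_n$), there are only countably many pairs of determined lines and hence only countably many new intersection points. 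Thus each $Y_{n+1}$ is countable, and so is $\tilde Y\coloneqq\bigcup_{n}Y_n$.

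Finally I would verify that $\tilde Y$ is a projective subplane. Non-degeneracy holds by the first step, since no new collinearities among the four chosen points can arise. For closedness, take two distinct lines of the subspace, i.e.\ traces $L\cap\tilde Y$ and $M\cap\tilde Y$ with $L\ne M$, each having at least two points in $\tilde Y$; the four witnessing points all appear at some common finite stage $Y_n$, so both $L$ and $M$ are determined by $Y_n$, and therefore $L\cap M\in Y_{n+1}\subseteq\tilde Y$. Hence the two lines meet inside $\tilde Y$. (Distinct traces do come from distinct lines of $\bX$, since two lines sharing two points coincide by (L1).) Therefore $\tilde Y$ is a non-degenerate closed plane, i.e.\ a projective plane, containing $Y$.

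The argument is essentially routine, and the only point needing care is keeping each stage countable; this rests entirely on the observation that a countable set of points determines only countably many lines, so each closure step adds only countably many intersection points. The non-degeneracy bookkeeping is the other thing to monitor, but it becomes automatic once the four independent points are inserted at the outset, precisely because subspace collinearity is inherited and cannot create new collinearities among points already present.
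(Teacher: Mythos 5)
Your proposal is correct and follows essentially the same route as the paper, which constructs $\tilde Y$ by the same closure procedure carried out inside $\bX$ (explicitly, the planar closure of $Y$ in $\bX$): your stages $Y_n$ are exactly the recursive planar-closure stages, with the countability bookkeeping and the insertion of four independent points handled just as the paper intends. No gaps to report.
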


\begin{pf}
	Apply the same procedure as in the previous lemma, working inside $\bX$
    (equivalently, we can just take the planar closure of $Y$ in $\bX$).
\end{pf}

\begin{tw}\label{THMFajwPointFri}
	Assume the continuum hypothesis. There exists a unique projective plane $\bV$ of cardinality $\aleph_1$ with the following extension property:
	\begin{enumerate}
		\item[{\rm(E)}] Given a countable closed plane $B$ and its closed subplane $A$, every embedding of $A$ into $\bV$ extends to an embedding of $B$ into $\bV$.
	\end{enumerate}
	Furthermore:
	\begin{enumerate}[itemsep=0pt]
		\item[{\rm(H)}] Every isomorphism between countable closed subplanes of $\bV$ extends to an automorphism of $\bV$.
		\item[{\rm(U)}] Every plane of cardinality $\loe \aleph_1$ embeds into $\bV$.
	\end{enumerate}
\end{tw}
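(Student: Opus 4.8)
The plan is to realize $\bV$ as a ``generic'' object of length $\omega_1$ for the class $\mathcal K$ of countable closed planes, in the spirit of a \fra\ limit pushed up one cardinal under CH. The first thing I would record is that $\mathcal K$ is an amalgamation class. Every countable closed plane $A$ is an amalgamation base: if $A$ embeds into countable closed planes $X,Y$, then, since any two lines of $A$ already meet in $A$, both legs are aplanar, so Lemma~\ref{lemma:aplanar_amalgamation} yields a countable linear-space amalgam, which Lemma~\ref{LMghyio} lets me complete to a countable closed plane amalgamating $X,Y$ over $A$ (with $X,Y$ remaining induced subspaces). The same ``disjoint union, then close up'' trick gives joint embedding. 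I would also note the base fact that a single line is a (degenerate) closed plane, and that a countable line embeds into any line of an uncountable projective plane.

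\textbf{Construction of $\bV$ and property (E).} Under CH there are only $\aleph_1$ isomorphism types of pairs $A\subseteq B$ with $B\in\mathcal K$ and $A$ a closed subplane, and from any fixed countable structure only $\aleph_0^{\aleph_0}=\aleph_1$ embeddings. I would build a continuous increasing chain $\seqof{V_\alpha}{\alpha<\omega_1}$ in $\mathcal K$, starting from $V_0=\proplane{\field 2}$, using a standard bookkeeping that enumerates all tasks of the form ``an embedding $e\colon A\to V_\alpha$ together with a closed extension $A\subseteq B$ in $\mathcal K$''. When such a task is served at a stage $\beta\ge\alpha$, I amalgamate $V_\beta$ with $B$ over the amalgamation base $e[A]$ and close up, obtaining $V_{\beta+1}\supseteq V_\beta$ into which $B$ embeds extending $e$. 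Put $\bV=\bigcup_{\alpha<\omega_1}V_\alpha$. Then $|\bV|=\aleph_1$, and $\bV$ is a projective plane: it is non-degenerate (it contains $V_0$) and closed, because any two of its lines are determined by four points lying in a common $V_\alpha$, where the two lines already intersect. Property (E) follows because the image of any embedding $e\colon A\to\bV$ of a countable $A$ lies in some $V_\alpha$ by regularity of $\omega_1$, and the corresponding task is served at a later stage.

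\textbf{Deriving (U), (H) and uniqueness from (E).} For (U), given a closed plane $P$ with $|P|\loe\aleph_1$ I would write $P=\bigcup_{\alpha<\omega_1}P_\alpha$ as a continuous increasing union of countable closed subplanes, each $P_\alpha$ the planar closure in $P$ of a countable set, so each $P_\alpha$ is a countable closed plane (Lemma~\ref{LMgiiuet}) and $P_\alpha$ is a closed subplane of $P_{\alpha+1}$. Starting from an embedding of $P_0$ into $\bV$ (embed a line of $P_0$ into a line of $\bV$, then apply (E)), I iterate (E) along the chain, taking unions at limits, to obtain an embedding $P\to\bV$; composing with Lemma~\ref{LMghyio} upgrades this to universality for all linear spaces of size $\loe\aleph_1$. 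For (H), given an isomorphism $h$ between countable closed subplanes of $\bV$, I run a back-and-forth of length $\omega_1$: to force a prescribed point $v_\alpha$ into the domain I enclose the current domain and $v_\alpha$ in a countable closed subplane (Lemma~\ref{LMgiiuet}) and extend the current isomorphism via (E); symmetrically I force $v_\alpha$ into the range. Unions of the resulting increasing isomorphisms give an automorphism extending $h$. Uniqueness is the analogous back-and-forth between two planes satisfying (E), using that both are universal for countable closed planes to start and to extend.

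The heart of the argument, and the step I expect to be the main obstacle, is the amalgamation: that closed planes are amalgamation bases (Lemma~\ref{lemma:aplanar_amalgamation}) and that the amalgam stays in $\mathcal K$ after closing up (Lemma~\ref{LMghyio}); once this is in place, the rest is routine transfinite bookkeeping and back-and-forth. I would also flag the two bookkeeping-adjacent checks that are easy to get wrong: that each $V_\alpha$ remains an \emph{induced} closed subplane of its successor (so that amalgamating and closing up never creates new collinearities among old points), and that the length-$\omega_1$ union $\bV$ is genuinely closed, which both reduce to the regularity of $\omega_1$ and the fact that a line of $\bV$ is determined by finitely many points.
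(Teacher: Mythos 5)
Your proposal is correct and follows essentially the same route as the paper: the paper obtains existence, uniqueness and (H) by citing the general theory of \fra\ limits for the class of countable closed planes, with the amalgamation property supplied by exactly your ingredients (any embedding of a closed plane is aplanar, so Lemma~\ref{lemma:aplanar_amalgamation} applies, and one closes up via Lemmas~\ref{LMghyio} and~\ref{LMgiiuet}), CH providing the $\aleph_1$ bound on isomorphism types and embeddings, and (U) derived just as you do, by extending an arbitrary plane to a projective plane of size $\aleph_1$ and decomposing it into a continuous $\omega_1$-chain of countable closed subplanes. Your write-up merely unpacks the black box of the general theory into an explicit bookkeeping chain and back-and-forth, which is a faithful execution of the same argument.
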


Recall that a plane is closed if every two of its lines intersect.

\begin{pf}
	The first part (the existence and uniqueness) is an instance of the general theory of \fra\ limits (see e.g.~\cite{Kub40}), thanks to the amalgamation property
    (which follows from Lemmas~\ref{LMgiiuet} and \ref{lemma:aplanar_amalgamation}) and the cardinal arithmetic assumption implying that there are at most $\aleph_1$ many
    isomorphisms
    types and embeddings between countable closed planes. Property (H), homogeneity, again follows from the general theory.

	Concerning Property (U), universality, the general theory gives it for all closed planes of cardinality $\loe \aleph_1$.
	On the other hand, thanks to Lemma~\ref{LMghyio}, every plane of cardinality $\loe \aleph_1$ can be extended to a projective plane of cardinality $\aleph_1$ and, thanks to Lemma~\ref{LMgiiuet} and easy transfinite induction, a projective plane of cardinality $\aleph_1$ can be decomposed into a continuous $\omega_1$-chain of countable projective subplanes. Thus, the general theory gives the full universality, that is, Property (U).
\end{pf}

\subsection{Final remarks}

It turns out that if finite linear spaces fail the weak amalgamation property, which according to our results is equivalent to the failure of Conjecture (\pfp), then the smallest cardinality of a universal family of countable projective planes is the continuum, see Theorem~\ref{THMJuniwa} or~\cite[Corollary 6.3]{KraKub}.

On the other hand, if Conjecture (\pfp) is true, the general theory of generic limits yields a countable projective plane $\bW$ that is generic in the sense of the natural game, where two players alternately build bigger and bigger finite linear spaces, simply by adding new points and new lines. Either of the two players has a strategy leading to a linear space isomorphic to $\bW$, obtained as the union of the chain built by the players.
This game, called an \emph{abstract Banach-Mazur game}, makes sense in arbitrary mathematical structures and even in arbitrary categories, see~\cite{KraKub} and \cite{Kub61} for details.
The generic plane $\bW$ (assuming it exists) would be locally finite, in the sense that every finite subset is contained in a finite projective subplane. Hence, it would not contain the projective plane $\proplane{\Qyu}$, therefore it would not be universal in the class of all countable projective planes.

{From} the previous section we know that, assuming the continuum hypothesis, there exists a universal projective plane $\bV$ of the smallest uncountable cardinality, due to Theorem~\ref{THMFajwPointFri}. Furthermore, homogeneity with respect to countable closed subplanes makes $\bV$ unique up to isomorphism. Indeed, if $A \subs B$ are countable closed planes and $\map e A \bV$ is an embedding then, by universality, there exists an embedding $\map f B \bV$ and, by homogeneity, there is an automorphism $\map h \bV \bV$ such that $f\rest A = h \cmp e$; finally $h^{-1} \cmp f$ extends $e$. This shows that $\bV$ has property (E) of Theorem~\ref{THMFajwPointFri}.
Note that $\bV$ cannot be algebraic, as it contains all countable projective planes including the ``pathological'' (non-Desarguesian) ones. Nevertheless, $\bV$ contains copies of all projective planes of the form $\proplane{K}$, where $K$ is any field (or, more generally, a division ring) of cardinality at most the continuum. That includes the field of complex numbers and all its subfields, as well as all finite and countable fields of positive characteristic.

Summarizing, the following questions seem to be relevant here.

\begin{pyt}
	Does there exist a universal projective plane of cardinality continuum, without any extra set-theoretic assumptions?
\end{pyt}

\begin{pyt}
	What is the universality number of the class of all countable projective planes?
\end{pyt}

By Theorem~\ref{THMJuniwa}, the answer to the second question is easy (the continuum) if there exists a finite linear space not embeddable into any finite projective plane (in case Conjecture (\pfp) is false). Otherwise, we only have a generic countable plane $\bW$, mentioned above, which is not universal. It is stil possible that there exists another countable projective plane containing copies of all countable linear spaces.

\end{document}